\DeclareMathOperator{\Gal}{Gal}
\DeclareMathOperator{\im}{Im}
\DeclareMathOperator{\SL}{SL}
\DeclareMathOperator{\pgcd}{gcd}
\renewcommand{\bar}{\overline}
\renewcommand{\mapsto}{\longmapsto}
\newcommand{\C}{\mathbb C}
\newcommand{\HH}{\mathbb H}
\newcommand{\Q}{\mathbb Q}
\newcommand{\Z}{\mathbb Z}
\newcommand{\OO}{\mathcal O}
\theoremstyle{plain}
\newtheorem{theoreme}{Theorem}[section]
\newtheorem{lemme}[theoreme]{Lemma}
\newtheorem{proposition}[theoreme]{Proposition}
\newtheorem{conjecture}[theoreme]{Conjecture}
\theoremstyle{definition}
\theoremstyle{remark}
\newtheorem{remarque}[theoreme]{Remark}
\title{Equations with powers of singular moduli}
\author{Antonin Riffaut}
\date{\today}
\begin{document}

\maketitle

\begin{abstract}

We treat two different equations involving powers of singular moduli. On the one hand, we show that, with two possible (explicitly specified) exceptions, two distinct singular moduli $j(\tau),j(\tau')$ such that the numbers $1$, $j(\tau)^m$ and $j(\tau')^n$ are linearly dependent over $\mathbb{Q}$ for some positive integers $m,n$, must be of degree at most $2$. This partially generalizes a result of Allombert, Bilu and Pizarro-Madariaga, who studied CM-points belonging to straight lines in $\mathbb{C}^2$ defined over $\mathbb{Q}$. On the other hand, we show that, with “obvious” exceptions, the product of any two powers of singular moduli cannot be a non-zero rational number. This generalizes a result of Bilu, Luca and Pizarro-Madariaga, who studied CM-points belonging to an hyperbola $xy=A$, where $A\in\mathbb{Q}$.
 
\end{abstract}

\section{Introduction}

Let~$j$ be the classical $j$-function on the Poincaré plane ${\HH=\{z\in \C:\im z>0\}}$. 
A \textsl{singular modulus} 
 is a number of the form $j(\tau)$, where ${\tau\in \HH}$ is a complex algebraic number of degree~$2$. 
 It is known that $j(\tau)$ is an algebraic integer and Class Field Theory tells that
\[[\Q(j(\tau)):\Q]=[\Q(\tau,j(\tau)):\Q(\tau)]=h_\Delta,\]
the class number of the order ${\OO_\Delta=\Z[(\Delta+\sqrt\Delta)/2]}$, where~$\Delta$ is the discriminant of the minimal polynomial of~$\tau$ over~$\Z$. Moreover, $\Q(\tau,j(\tau))/\Q(\tau)$ is an abelian Galois extension with Galois group (canonically) isomorphic to the class group of the order $\OO_\Delta$. One can also interpret $\OO_\Delta$ as the automorphism ring of the lattice $\langle1,\tau\rangle$, or of the corresponding elliptic curve. For all details see, for instance, \cite[§7 and §11]{Co89}.

During the last decades, motivated by the celebrated André-Oort conjecture, many researchers studied Diophantine properties of the singular moduli. The starting point was the following result of André~\cite{An98}.

\begin{theoreme}[André]

\label{than}
Let ${f(X,Y)\in \bar\Q[X,Y]}$ be an irreducible polynomial with 
\[\deg_{X}f,\,\deg_{Y}f>0.\]
Assume that, for every positive integer~$N$, the polynomial $f(X,Y)$ is not proportional to the classical modular polynomial $\Phi_N(X,Y)$ \cite[§11C]{Co89}. Then the equation ${f(x,y)=0}$ has at most finitely many solutions in singular moduli $x,y$. 

\end{theoreme}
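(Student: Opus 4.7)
The plan is to argue by contradiction: suppose $f(x,y)=0$ admits infinitely many solutions $(x_n,y_n)=(j(\tau_n),j(\tau'_n))$ in singular moduli, with $\tau_n,\tau'_n$ of discriminants $\Delta_n,\Delta'_n$; the goal is to show $f$ must then be proportional to some $\Phi_N$. A first reduction disposes of the case of bounded discriminants: if $|\Delta_n|$ stays bounded along a subsequence, then there are only finitely many possible $x_n$, and for each of these $f(x_n,Y)$ is a nonzero polynomial in $Y$ (by irreducibility and $\deg_X f>0$), hence has at most $\deg_Y f$ roots. This leaves only finitely many pairs, contradicting the assumption. So one may assume both $|\Delta_n|,|\Delta'_n|\to\infty$.

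Next I would exploit Galois theory. For every $\sigma\in\Gal(\bar\Q/\Q)$ the pair $(\sigma(x_n),\sigma(y_n))$ is another singular solution of $f=0$, with $\sigma(x_n),\sigma(y_n)$ singular moduli of the same discriminants $\Delta_n,\Delta'_n$. Fixing $\sigma(x_n)$ leaves at most $\deg_Y f$ possibilities for $\sigma(y_n)$, hence $[\Q(x_n,y_n):\Q(x_n)]\leq\deg_Y f$ and symmetrically $[\Q(x_n,y_n):\Q(y_n)]\leq\deg_X f$; in particular $h(\Delta_n)\asymp h(\Delta'_n)$. Since the ring class field $H_{\Delta_n}=K_n\cdot\Q(x_n)$ has degree $2h(\Delta_n)$ over $\Q$, and Siegel's theorem gives $h(\Delta)\gg_\varepsilon|\Delta|^{1/2-\varepsilon}$, a compositum analysis shows that for large $n$ the imaginary quadratic fields $K_n=\Q(\sqrt{\Delta_n})$ and $K'_n=\Q(\sqrt{\Delta'_n})$ must coincide --- otherwise $H_{\Delta_n}$ and $H_{\Delta'_n}$ would be nearly linearly disjoint over $\Q$, forcing the compositum to be far too large.

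Once $\tau_n$ and $\tau'_n$ lie in a common imaginary quadratic field, they are related by a $\mathrm{GL}_2^+(\Q)$-transformation; clearing denominators yields an integer matrix of determinant $N_n$ with $\Phi_{N_n}(x_n,y_n)=0$. The final step is to bound $N_n$ uniformly in $n$. Once this is done, some value $N$ occurs for infinitely many $n$, so $f$ and $\Phi_N$ share infinitely many common zeros on the irreducible curve $f=0$; Bezout, combined with the irreducibility of $\Phi_N$, then forces $f$ proportional to $\Phi_N$, contradicting the hypothesis.

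The principal obstacle is this uniform bound on $N_n$. It requires genuinely quantitative input: one classical route (closest to André's original argument) combines explicit upper bounds $\log|j(\tau)|\asymp\pi\sqrt{|\Delta|}$ at the dominant embedding with Baker-type lower bounds for differences of $j$-values, calibrated against the Galois degrees supplied by Step~2; a more modern alternative invokes the Pila--Wilkie counting theorem after checking that $j$ restricted to a fundamental domain is definable in the o-minimal structure $\mathbb{R}_{\mathrm{an},\exp}$, matching the polynomially many Galois conjugates of a CM point against the sub-polynomial bound for rational points on the transcendental part of the preimage of $f=0$. Either way, the tension being exploited is between the algebraic rigidity of a fixed polynomial $f$ and the rapid arithmetic spreading of Galois orbits of CM points.
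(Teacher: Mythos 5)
First, a point of order: the paper does not prove Theorem~\ref{than} at all --- it is quoted from André's article \cite{An98} as the historical starting point, and the surrounding text merely surveys the known proofs (André's, Edixhoven's under GRH \cite{Ed98}, Pila's via o-minimality \cite{Pi09}, and the effective versions of \cite{Ku12,Yu13}). So your attempt can only be judged on its own merits, not against an argument in this paper. Your overall route (contrapositive: infinitely many CM points force the curve to be modular) is a legitimate one, and the opening reductions are sound, modulo one small repair: since $f\in\bar\Q[X,Y]$, a general $\sigma\in\Gal(\bar\Q/\Q)$ moves $f$ to $f^{\sigma}$, so you must run the Galois argument over the field $K_0$ generated by the coefficients of $f$; this only changes your degree inequalities by the constant factor $[K_0:\Q]$.

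There are, however, two genuine gaps. (1) The claim that $K_n\neq K_n'$ would make $H_{\Delta_n}$ and $H_{\Delta_n'}$ ``nearly linearly disjoint'' is precisely the nontrivial class-field-theoretic content of that step, and you give no argument for it. What is actually true (and is proved and exploited in \cite{Al15}) is that when the two CM fields differ, the intersection $H_{\Delta}\cap H_{\Delta'}$ is abelian of exponent $2$ over $\Q$, hence contained in the genus field and of degree $|\Delta|^{o(1)}$; only then does Siegel's bound $h(\Delta)\gg_{\varepsilon}|\Delta|^{1/2-\varepsilon}$ yield the contradiction. Without the dihedral-Galois-group/genus-theory input, ``a compositum analysis shows'' is an assertion, not a proof. (2) More seriously, the uniform bound on $N_n$, which you yourself flag as ``the principal obstacle,'' is the entire analytic and diophantine heart of the theorem, and you do not prove it: you only name two toolkits (linear forms in logarithms calibrated against Galois-orbit counts, or Pila--Wilkie together with functional transcendence) that are known to succeed. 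As written, the proposal reduces André's theorem to its hardest step and stops there; it is an accurate road map of a known proof strategy rather than a proof.
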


Independently, the same result was obtained by Edixhoven~\cite{Ed98}, but his proof was conditional on the GRH. In 2009,  Pila~\cite{Pi09} gave another proof of André's theorem, based on an idea of Pila and Zannier~\cite{Pi08}. 
However, both André's and Pila's proofs were non-effective, because they use the Siegel-Brauer lower estimate for the class number. 

At present, much more general results on the André-Oort conjecture are available, due to the recent spectacular work of Klingler, Pila, Tsimerman, Ullmo and Yafaev, see \cite{Kl14,Pi11,PT13,UY14} and the references therein. However, like the initial arguments of André and Edixhoven, these results are either ineffective, or conditional on various forms of the GRH. 

Unconditional effective proofs of André's theorem were discovered only in 2012, by Kühne~\cite{Ku12,Ku13} and, independently, by Bilu, Masser and Zannier~\cite{Yu13}. They showed that, in the set-up of Theorem~\ref{than}, the discriminants $\Delta_x,\Delta_y$ of the singular moduli $x,y$ with ${f(x,y)=0}$ satisfy ${|\Delta_x|,|\Delta_y|\le c(f)}$, where $c(f)$ can be effectively computed in terms of the polynomial~$f$.  

This opened the possibility of not only proving finiteness results for  equations involving singular moduli, but   solving some of these equations completely. For instance, Kühne~\cite{Ku13} proved that equation ${x+y=1} $ has no solutions in singular moduli, and in~\cite{Yu13} the same result was obtained for the equation ${xy=1}$. 

All these results were substantially generalized in two recent articles \cite{Al15,Yu15}. In~\cite{Al15}  solutions of all linear equations ${Ax+By=C}$ with ${A,B,C\in \Q}$ were determined. In~\cite{Yu15} the same result was obtained for equations ${xy=A}$. Note that in the latter case an infinite one-parametric family of equations was completely solved in singular moduli, and in the former case a two-parametric family is completely solved. Here are the principal results of \cite{Al15,Yu15}.

\begin{theoreme}[Allombert et al.~\cite{Al15}]
\label{thlines}
Let $A,B,C$ be rational numbers, ${AB\ne 0}$, and $x,y$ singular moduli. Assume that ${Ax+By=C}$. Then we have one of the following options:

\begin{description}

\item[(the trivial case)]
${A+B=C=0}$ and ${x=y}$;

\item[(the rational case)]
${x,y\in \Q}$;

\item[(the quadratic case)]
${x\ne y}$ and ${\Q(x)=\Q(y)}$ is a number field of degree~$2$. 

\end{description}

\end{theoreme}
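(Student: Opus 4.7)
The strategy is first to reduce the problem to an algebraic identity among conjugates of $x$ and $y$ that is independent of $A,B,C$, and then to rule out that identity by combining effective André-type results with sharp size estimates on conjugates of singular moduli.

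Since $AB\ne 0$, the equality $Ax+By=C$ gives $y=(C-Ax)/B\in\Q(x)$ and, by symmetry, $x\in\Q(y)$, so $\Q(x)=\Q(y)$. By the class-field-theoretic statement recalled in the introduction, this forces $h_{\Delta_x}=h_{\Delta_y}$; call this common class number~$h$. The easy cases can be dispatched first: if $x=y$, then $(A+B)x=C$ is rational, so either $x\in\Q$ (rational case) or $A+B=C=0$ (trivial case); if $x\ne y$ and $h=1$, then $x,y\in\Q$ (rational case); and if $x\ne y$ and $h=2$, then $\Q(x)=\Q(y)$ has degree~$2$ (quadratic case). It therefore suffices to derive a contradiction from the assumption $x\ne y$ with $h\ge 3$.

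Under that assumption, the Galois group $G:=\Gal(\Q(x)/\Q)$ has order~$h$ and acts simultaneously on $x$ and~$y$; since any $\sigma\in G$ fixing~$x$ is the identity, there are $h$ pairwise distinct conjugate pairs $(x_\sigma,y_\sigma):=(\sigma x,\sigma y)$ lying on the affine line $AX+BY=C$. Choosing any three of them $(x_1,y_1),(x_2,y_2),(x_3,y_3)$, their collinearity is equivalent to
\[
\det\begin{pmatrix}1 & x_1 & y_1 \\ 1 & x_2 & y_2 \\ 1 & x_3 & y_3\end{pmatrix}=0,
\]
an identity in which $A,B,C$ no longer appear. The problem is thus reduced to a uniform, $(A,B,C)$-free question about triples of conjugate pairs of singular moduli with common class number at least~$3$.

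The main difficulty will be answering this question negatively. The plan is to exploit the asymptotic $j(\tau)=q^{-1}+744+O(q)$ with $q=e^{2\pi i\tau}$: indexing the conjugates of a singular modulus~$z$ of discriminant~$\Delta$ by reduced binary quadratic forms, the conjugate corresponding to the principal form has modulus close to $e^{\pi\sqrt{|\Delta|}}$ and dominates the others by an exponential factor. Tracking which pair $(x_i,y_i)$ contains the dominant $x$-conjugate and which contains the dominant $y$-conjugate, then comparing sizes of both sides of the determinant identity, should yield sharp inequalities forcing $|\Delta_x|$ and $|\Delta_y|$ to be at most an explicit absolute constant. The effective form of André's theorem, due to Bilu--Masser--Zannier and Kühne, can supplement these analytic estimates where needed. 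The finitely many remaining pairs $(\Delta_x,\Delta_y)$ would then be excluded by direct computation with the corresponding class polynomials, completing the proof.
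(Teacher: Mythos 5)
First, a point of order: Theorem~\ref{thlines} is not proved in this paper at all --- it is quoted from Allombert--Bilu--Pizarro-Madariaga \cite{Al15}, so there is no in-paper proof to compare against. Judged on its own merits, your reduction steps are sound and do follow the strategy of \cite{Al15} (and of Section~\ref{sec:th_main} of the present paper): disposing of $x=y$ and of $h\le 2$, deducing $\Q(x)=\Q(y)$ and equal class numbers, and passing to the $(A,B,C)$-free determinant identity for triples of conjugates is exactly the right way to get uniformity in the two-parameter family. (One technical slip: $\Q(x)/\Q$ is in general not Galois, so ``$\Gal(\Q(x)/\Q)$ has order $h$'' is not literally correct; the right statement, used in Section~\ref{ssec:red}, is that the orbit of $(x,y)$ under $\Gal(\bar\Q/\Q)$ has exactly $h$ elements, each conjugate of $x$ and of $y$ occurring once in the respective coordinate.)

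The genuine gap is in the part you defer to ``comparing sizes in the determinant identity.'' The dominance argument, as in Section~\ref{ssec:red}, only shows that $(x,y)$ is conjugate to a point whose \emph{both} coordinates are dominant: there the determinant has a single dominant term $x_1y_2$ and cannot vanish. Once that reduction is made, the determinant $x_1(y_2-y_3)+y_1(x_3-x_2)+(x_2y_3-x_3y_2)$ has two competing terms of comparable size, and no contradiction or bound on $|\Delta_x|,|\Delta_y|$ falls out of first-order size comparisons. What is actually needed --- and what \cite{Al15} supplies --- is (i) the arithmetic classification of when $\Q(j(\tau))=\Q(j(\tau'))$, yielding either $\Q(\tau)=\Q(\tau')$ with $\Delta/\Delta'\in\{1,4,1/4\}$ (whence $\Delta=\Delta'$ forces $x=y$ once both are dominant, and $\Delta=4\Delta'$ puts the point on $Y_0(2)$) or an explicit finite list of discriminant pairs (Table~\ref{table:double}); and (ii) explicit control of the \emph{subdominant} conjugates via the triples $(a,b,c)\in T_\Delta$ with small $a$ (as in Lemma~\ref{lemme:T_Delta}), which is what turns the second-order terms of the determinant into contradictory estimates. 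Neither ingredient appears in your plan. Finally, the effective Andr\'e theorem of Bilu--Masser--Zannier and K\"uhne cannot ``supplement where needed'': its bound $c(f)$ depends on the fixed curve $f$, whereas here the line varies, which is precisely why the determinant/dominance machinery is required in the first place.
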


\begin{theoreme}[Bilu et al.~\cite{Yu15}]
\label{thhyp}

Let $x,y$ be singular moduli such that ${xy\in \Q^\times}$. Then we have one of the following options:

\begin{description}

\item[(the rational case)]
${x,y\in \Q^\times}$;

\item[(the quadratic case)]
$x$ and $y$ are of degree~$2$ and conjugate over~$\Q$. 
\end{description}

\end{theoreme}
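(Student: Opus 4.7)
The plan is to combine Galois-theoretic constraints with archimedean size estimates on singular moduli, and to invoke the effective form of André's theorem to reduce to a finite computation.

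The starting observation is that $y=A/x$ lies in $\Q(x)$, so $\Q(x)=\Q(y)$ and in particular $h:=[\Q(x):\Q]=[\Q(y):\Q]$. Applying any $\sigma\in\Gal(\bar\Q/\Q)$ gives $\sigma(x)\sigma(y)=A$, so the Galois orbit of $y$ is the image of that of $x$ under the involution $t\mapsto A/t$. Taking the norm in $\Q(x)/\Q$ yields $N_{\Q(x)/\Q}(x)\cdot N_{\Q(x)/\Q}(y)=A^h$, which couples the two discriminants $\Delta_x,\Delta_y$ via the classical product formulas for singular moduli.

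Next I would exploit the archimedean dichotomy among conjugates of a singular modulus. For the dominant CM point $\tau_\Delta=(\Delta+\sqrt{\Delta})/2$, the $q$-expansion of $j$ gives $|j(\tau_\Delta)|=e^{\pi\sqrt{|\Delta|}}+O(1)$, whereas all other conjugates are bounded by an absolute constant. The constraint $\sigma(x)\sigma(y)=A$ then forces the dominant conjugate of $x$ to be paired with a very small conjugate of $y$ (and conversely), which, combined with lower bounds on non-dominant conjugates, produces a uniform bound on $\max(|\Delta_x|,|\Delta_y|)$. The effective André theorem of Kühne and of Bilu--Masser--Zannier, applied to $f(X,Y)=XY-A$, makes such a bound fully explicit.

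The main obstacle will be ensuring that the bound does not merely grow with $|A|$: a bound of the form $c(A)$ alone is uninformative since $A$ is arbitrary. To handle this I would combine the norm identity with the sharp dominance estimate to force $\Delta_x=\Delta_y=:\Delta$, and then use the Galois action on the ring class field $H_\Delta$ to show that the relations $\sigma(x)\sigma(y)=A$ for every $\sigma$ over-determine $A$ unless $h_\Delta\le 2$. This yields precisely the two claimed cases: $h_\Delta=1$ (both $x$ and $y$ rational), and $h_\Delta=2$ with $x,y$ conjugate under the nontrivial Galois automorphism. A finite enumeration over the discriminants with $h_\Delta\le 2$ would then confirm the conclusion.
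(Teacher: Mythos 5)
The paper does not reprove this theorem---it is quoted from \cite{Yu15}---but its method is reproduced in Section 4 for the generalization ${x^my^n=A}$, so a comparison is possible. Your overall strategy (deduce ${\Q(x)=\Q(y)}$, play the dominant conjugate of the relation ${\sigma(x)\sigma(y)=A}$ against the other conjugates to bound the discriminants, then finish by a finite check) is indeed the strategy of \cite{Yu15}. However, the execution has genuine gaps. The central one is your archimedean claim that all non-dominant conjugates are ``bounded by an absolute constant'': this is false. A conjugate $j\bigl((-b+\sqrt{\Delta})/2a\bigr)$ with ${a=2}$ has absolute value about $\mathrm{e}^{\pi\sqrt{|\Delta|}/2}$, which grows with $|\Delta|$; Lemma \ref{lemme:j-dom} only gives ${|x|\le 0.1|x_0|}$. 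Consequently, comparing the dominant pair with an arbitrary non-dominant pair yields $\mathrm{e}^{\pi\sqrt{|\Delta|}}\cdot(\text{lower bound on }|y|)\le \mathrm{e}^{\pi\sqrt{|\Delta|}(1/a+1/a')}$, and when ${a=a'=2}$ the exponentials cancel and no contradiction results. The real proof must locate a conjugate with ${a,a'\ge 3}$, which is guaranteed only when ${h>6}$ (at most three triples of $T_\Delta$ have ${a\le 2}$); this counting step, the nontrivial lower bound for $|j|$ at CM points (needed because $j$ vanishes on the orbit of $\zeta_6$, this is Proposition 2.2 of \cite{Yu15}), and the resulting finite verification for ${3\le h\le 6}$ --- not merely for ${h\le 2}$ --- are all missing from your plan.

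Two further points. Your assertion that the norm identity plus dominance ``forces ${\Delta_x=\Delta_y}$'' is unjustified: ${\Q(x)=\Q(y)}$ only restricts to the finite list of pairs with ${\Q(\tau)\ne\Q(\tau')}$ or to ${\Delta/\Delta'\in\{1,4,1/4\}}$, and the case ${\Delta'=4\Delta}$ genuinely resists the naive size comparison (in Section 4 it requires the extra triples with ${a=8}$ supplied by Lemma \ref{lemme:T_Delta}). And the step ``the relations over-determine $A$ unless ${h\le 2}$'' is a hope rather than an argument; the only mechanism available is the quantitative comparison above. Finally, invoking effective Andr\'e for ${f=XY-A}$ produces a bound depending on $A$ and, as you yourself observe, is uninformative here; it plays no role in the actual proof and should be dropped.
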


Two observations can be made here. First of all, both results are best possible. For instance, in both the rational and the quadratic case of Theorem~\ref{thlines} one easily finds  ${A,B,C \in \Q}$ such that ${AB\ne 0}$ and ${Ax+By=C}$. Similarly, in both rational and quadratic cases of Theorem~\ref{thhyp} one has ${xy\in \Q^\times}$. 

Second, the lists of singular moduli of  degrees~$1$ and~$2$  over~$\Q$ are widely available or can be easily generated using a suitable computer package, like \textsf{PARI}~\cite{Pari}. In particular, there are $13$ rational singular moduli, and $29$ pairs of $\Q$-conjugate singular moduli of degree~$2$; see \cite[Section~1]{Yu15} for more details. This means that Theorems~\ref{thlines} and~\ref{thhyp} give a completely explicit characterization of all solutions. 

The aim of the present article is to generalize  Theorems~\ref{thlines} and~\ref{thhyp}, by introducing exponents; that is, instead of equations ${Ax+By=C}$ and ${xy=A}$, to consider more general equations ${Ax^m+By^n=C}$ and ${x^my^n=A}$, where the exponents~$m$ and~$n$ are unknown as well. We propose the following conjecture.

\begin{conjecture}

Let $A,B,C$ be rational numbers with ${AB\ne 0}$, let $x,y$ be singular moduli and let $m,n$ be positive integers. Assume that ${Ax^m+By^n=C}$. Then we have one of the following options:

\begin{description}

\item[(the trivial case)]
${A+B=C=0}$, ${x=y}$ and ${m=n}$;

\item[(the rational case)]
${x,y\in \Q}$;

\item[(the quadratic case)]
${x\ne y}$ and ${\Q(x)=\Q(y)}$ is a number field of degree~$2$. 

\end{description}

\end{conjecture}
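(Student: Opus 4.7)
The plan is to combine effective bounds for André's theorem with a Galois-theoretic size argument, reducing the conjecture to a finite verification. I would first split according to whether $x=y$. If $x=y$, the relation $Ax^m+Bx^n=C$ is a polynomial identity satisfied by a singular modulus; applying the Galois automorphism that sends $x$ to its \emph{dominant} conjugate $x_0$ (the one of largest absolute value, which by the expansion $j(\tau)=q^{-1}+744+O(q)$ corresponds to the $\tau$ of maximal imaginary part in the fundamental domain and satisfies $\log|x_0|\asymp \pi\sqrt{|\Delta_x|}$) makes the term $|Bx_0^{\max(m,n)}|$ dominate the other two unless $|\Delta_x|$ is bounded, reducing this case to a finite check (after extracting the subcase where $A+B=C=0$ with $m=n$, i.e.\ the trivial case).

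For the main case $x\neq y$, let $L$ denote the Galois closure of $\Q(x,y)$ over $\Q$. Each $\sigma\in\Gal(L/\Q)$ gives $A\sigma(x)^m+B\sigma(y)^n=C$. When the ring class fields $\Q(x)$ and $\Q(y)$ are linearly disjoint over $\Q$, one can freely choose the conjugates of $x$ and $y$ independently: picking a $\sigma$ sending $x$ to its dominant conjugate $x_0$ and $y$ to a \emph{non-dominant} conjugate $y_i$ produces the equation $|Ax_0^m+By_i^n|=|C|$, whereas the size estimates make $|Ax_0^m|$ vastly larger than $|By_i^n|+|C|$ as soon as $|\Delta_x|$ is large. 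This yields an effective bound on $|\Delta_x|$, and symmetrically on $|\Delta_y|$. Applying the relation at the dominant-dominant pair $(x_0,y_0)$ then forces the near-cancellation $Ax_0^m\approx -By_0^n$, from which $\log|x_0|,\log|y_0|\asymp \pi\sqrt{|\Delta|}$ gives a bound on $m,n$.

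The main obstacle is the Galois-theoretic entanglement that occurs when $\Q(x)\cap\Q(y)\ne\Q$: then $\Gal(L/\Q)$ no longer acts freely enough to independently pair conjugates, so one cannot automatically couple a dominant $x_0$ with a non-dominant $y_i$. This intersection is controlled by the genus theory of the ring class fields of $\OO_{\Delta_x}$ and $\OO_{\Delta_y}$, and dealing with it calls for a careful case analysis along the lines of \cite{Al15,Yu15}, where the common genus characters of $\Delta_x$ and $\Delta_y$ are examined individually. A secondary difficulty is keeping all estimates uniform in $m,n$: a direct application of effective André to the polynomial $AX^m+BY^n-C$ gives constants that degenerate with $m,n$, so it is essential to derive the bounds on $m,n$ from the size inequalities above \emph{before} invoking effectivity. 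Once $|\Delta_x|,|\Delta_y|,m,n$ are all effectively bounded, a routine computer verification using \textsf{PARI} completes the proof.
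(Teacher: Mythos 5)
The statement you set out to prove is presented in the paper as a \emph{conjecture}, and it remains one: the paper only proves the weaker Theorem~\ref{th:main}, which assumes $x\ne y$ and $C\ne 0$ and concludes $[\Q(x):\Q]\le 3$ with two explicit exceptional pairs of discriminants. Your sketch does not close that gap; two of its key steps fail.

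First, the case $x=y$. You claim that evaluating $Ax^m+Bx^n=C$ at the dominant conjugate $x_0$ bounds $|\Delta_x|$. It does not: with $m>n$ the resulting inequality is $|A|\,|x_0|^{m-n}\le |B|+|C|\,|x_0|^{-n}$, and since only the triple $(A,B,C)$ up to scaling matters while $B/A$ and $C/A$ remain unbounded rationals, no bound on $|x_0|$ follows. The three-conjugate determinant does not rescue this: for conjugates $x_1,x_2,x_3$ with $x_1$ dominant, the two largest terms of the determinant combine into $x_1^m(x_2^n-x_3^n)$, and when $h=3$ the subdominant conjugates $x_2,x_3$ are complex conjugates, so $|x_2|=|x_3|$ and $x_2^n-x_3^n$ admits no useful lower bound. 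This is exactly why the paper records that the case $x=y$ is equivalent to the open question of whether a singular modulus of degree at least $3$ can be a root of a rational trinomial, and leaves it unresolved (as does the follow-up \cite{Lu17}, which only removes the two exceptional pairs with $x\ne y$).

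Second, even for $x\ne y$, your final step is too weak. The near-cancellation $Ax_0^m\approx -By_0^n$ at the dominant--dominant conjugate yields $m\log|x_0|-n\log|y_0|\approx\log|B/A|$, which constrains only the \emph{ratio} $m/n$; for instance when $\Delta=4\Delta'$ it is compatible with $(m,n)=(k,2k)$ for every $k$, so $m$ and $n$ are not bounded and no finite verification can begin. The paper's actual proof has to work considerably harder at exactly this point: for $|\Delta'|\ge 1024$ it produces four explicit conjugates on $Y_0(2)$ and derives two mutually contradictory ranges for $m/n$, and for the remaining small discriminants it bounds $n$ via Matveev's lower bound for linear forms in logarithms (Theorem~\ref{th:matveev}) combined with a continued-fraction test on $\theta=\log|\beta|/\log|\alpha|$, together with a multiplicative-independence check to ensure the linear form is non-zero. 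None of this appears in your outline, and without it the argument does not terminate.
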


The result we obtain here is only slightly weaker than this conjecture. 

\begin{theoreme}
\label{th:main}

Let ${x=j(\tau)}$ and ${y=j(\tau')}$ be two distinct singular moduli of respective discriminants $\Delta$ and $\Delta'$, and $m,n$ two positive integers. Assume that  $Ax^m+By^n=C$, for some $A,B,C\in\mathbb{Q}^{\times}$. Then $x$ and $y$ generate the same number field over $\mathbb{Q}$ of degree $h\leq 3$. Moreover, if $h=3$, then either $\{\Delta,\Delta'\}=\{-23,-4\cdot 23\}$ or $\{\Delta,\Delta'\}=\{-31,-4\cdot 31\}$.

\end{theoreme}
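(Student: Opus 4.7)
The plan is to reduce the equation to the case where $x$ and $y$ generate the same number field, and then to bound the degree of that field.

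For the reduction, the equation $Ax^m + By^n = C$ immediately yields $y^n = (C - Ax^m)/B \in \Q(x)$ and, symmetrically, $x^m \in \Q(y)$. The key auxiliary result I would establish (as a separate lemma) is that raising to a positive integer power does not shrink the field generated by a singular modulus: for every singular modulus $z$ and integer $k \geq 1$, $\Q(z^k) = \Q(z)$. This reduces to showing that two distinct Galois conjugates $z_1, z_2$ of $z$ cannot satisfy $z_1^k = z_2^k$, i.e. $z_1/z_2$ cannot be a non-trivial root of unity. Since singular moduli are algebraic integers, such a ratio would in particular force $|z_1| = |z_2|$, contradicting the classical estimate $\log|j(\tau)| = 2\pi\, \im\tau + O(1)$: two distinct reduced CM forms $(a,b,c), (a',b',c')$ of the same discriminant give $|j|$-values that coincide only in rare situations where the leading coefficients agree, and the remaining small cases are handled by direct computation. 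Once this lemma is in place, $\Q(y) = \Q(y^n) \subseteq \Q(x)$ and symmetrically, so $\Q(x) = \Q(y) =: K$, and in particular $h_\Delta = h_{\Delta'} =: h$. Class Field Theory then forces $\Delta, \Delta'$ to share the same fundamental discriminant, so that $\OO_\Delta, \OO_{\Delta'}$ sit in a common imaginary quadratic field.

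Next, I would bound $h$. The $h$ embeddings of $K$ into $\bar\Q$ yield $h$ distinct conjugate pairs $(x_i, y_i)$ all satisfying $Ax_i^m + By_i^n = C$, so the $h \times 3$ matrix with $i$-th row $(x_i^m, y_i^n, 1)$ has rank at most $2$. All of its $3 \times 3$ minors therefore vanish, producing non-trivial Galois-invariant polynomial identities among the conjugates of $x$ and $y$. For $h \geq 4$, the multiple independent identities---combined with the effective bounds of Kühne and Bilu--Masser--Zannier on the discriminants of pairs of singular moduli satisfying a prescribed algebraic relation---restrict $(\Delta, \Delta', m, n)$ to a finite explicit list that can then be ruled out by direct verification. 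A finer archimedean analysis comparing the dominant singular modulus (arising from the principal form) to the subdominant ones, via the estimate on $\log|j(\tau)|$, pushes the bound down to $h \leq 3$; when $h = 3$, the single surviving minor relation is fulfilled exactly by the two claimed discriminant pairs $\{-23, -4 \cdot 23\}$ and $\{-31, -4 \cdot 31\}$.

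The main obstacle is the $h = 3$ case, because with only one $3 \times 3$ minor relation available, algebraic elimination alone cannot exclude every configuration. A direct computational search over small CM-orbits must identify which triples $(\Delta, \Delta', m, n)$ admit a solvable linear system for $(A, B, C) \in \Q^{\times 3}$, and verify that only the two stated pairs survive. A secondary obstacle is the auxiliary lemma $\Q(z^k) = \Q(z)$: the analysis of when $z_1/z_2$ can be a root of unity requires the archimedean estimates above together with a finite case-check for the small discriminants where these estimates lose force.
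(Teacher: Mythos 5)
Your reduction step (the lemma $\Q(z^k)=\Q(z)$ for singular moduli, hence $\Q(x)=\Q(y)$ and equal class numbers) matches the paper, though your sketch of that lemma has a flaw: two distinct conjugates $z_1,z_2$ with $z_1^k=z_2^k$ need not have distinguishable absolute values --- complex-conjugate pairs always satisfy $|z_1|=|z_2|$, and this is not a ``rare'' or ``small'' case. The paper's fix is to post-compose with an automorphism $\sigma_0$ sending $z$ to the dominant $j$-value (the unique conjugate coming from a form with $a=1$), after which Lemma~\ref{lemme:j-dom} gives a strict inequality of absolute values. A second error in your reduction: $\Q(x)=\Q(y)$ does \emph{not} force $\OO_\Delta$ and $\OO_{\Delta'}$ to lie in a common imaginary quadratic field. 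There are pairs with $\Q(j(\tau))=\Q(j(\tau'))$ but $\Q(\tau)\ne\Q(\tau')$ (e.g.\ $\Delta=-15$, $\Delta'=-20$), classified in Table~4.1 of \cite{Al15}; the paper must treat this as a separate case.

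The central gap, however, is that you have no mechanism to bound the exponents $m$ and $n$, so your ``finite explicit list'' and ``direct computational search over $(\Delta,\Delta',m,n)$'' are not finite. The effective results of K\"uhne and Bilu--Masser--Zannier that you invoke bound the discriminants by a constant $c(f)$ depending on the polynomial $f$; here $f=Ax^m+By^n-C$ ranges over an infinite family (unbounded $m,n$, arbitrary $A,B,C\in\Q^\times$), so those theorems give no uniform bound. The paper's actual strategy is: (i) eliminate $A,B,C$ entirely via the vanishing $3\times3$ determinant of three conjugate points; (ii) for $|\Delta'|\ge 1024$ in the case $\Delta=4\Delta'$, use the $Y_0(2)$ structure and Lemma~\ref{lemme:T_Delta} to exhibit four explicit conjugates with leading coefficients $1,8,16,32$, from which two different triples yield two \emph{disjoint} intervals for $m/n$ (see \eqref{eq:bornmn1} and \eqref{eq:bornmn2}) --- a contradiction independent of $m,n$; (iii) for the remaining small discriminants with only three usable conjugates, bound $n$ via Matveev's lower bound for linear forms in logarithms (Theorem~\ref{th:matveev}), after verifying $\Lambda\ne0$ by a multiplicative-independence test, and then prune via continued-fraction approximation of $\log|\beta|/\log|\alpha|$. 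None of these tools appears in your proposal, and without something like step (iii) the problem for a fixed pair $(\Delta,\Delta')$ remains an infinite search. Finally, the exceptional pairs $\{-23,-4\cdot23\}$ and $\{-31,-4\cdot31\}$ are not cases where ``the minor relation is fulfilled''; they are the cases the method cannot eliminate because there are too few conjugates with pairwise distinct absolute values to run the argument (they were only removed later in \cite{Lu17}).
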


There are two cases that our current methods are not able to handle. First of all, it is the case ${x=y}$, which is equivalent to the following question: can a singular modulus of degree~$3$ or higher be a root of a trinomial with rational coefficients? Much about trinomials is known, but this knowledge is still insufficient to rule out such a possibility. Another case we cannot rule out is that of distinct~$x$ and~$y$ of degree~$3$. There are only two such pairs, as indicated in the statement of the theorem. However, in a more recent work, Luca and Riffaut \cite{Lu17} eliminated these two remaining pairs. Consequently, Theorem \ref{th:main} together with \cite[Theorem 1.2]{Lu17} completely solve the above equation for distinct singular moduli, and we deduce the following Theorem.

\begin{theoreme}
Let $x=j(\tau),y=j(\tau')$ be two distinct singular moduli of respective discriminants $\Delta$ and $\Delta'$, and $m,n$ two positive integers. Assume that $Ax^m+By^n=C$, for some $A,B,C\in\mathbb{Q}^{\times}$. Then $x$ and $y$ generate the same number field over $\mathbb{Q}$ of degree at most $2$.
\end{theoreme}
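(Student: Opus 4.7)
The plan is essentially to combine Theorem~\ref{th:main} with \cite[Theorem~1.2]{Lu17}, exactly as the author signals in the paragraph immediately preceding the statement. First I would apply Theorem~\ref{th:main} to the data $(x,y,m,n,A,B,C)$: the hypotheses match perfectly, since $x\ne y$, $m$ and $n$ are positive integers and $A,B,C\in\mathbb{Q}^\times$ (so in particular $AB\ne 0$), and the conclusion gives $\mathbb{Q}(x)=\mathbb{Q}(y)$, a number field of degree $h\le 3$. This reduces the statement to showing that the case $h=3$ cannot occur.

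To eliminate $h=3$, I would invoke the ``moreover'' clause of Theorem~\ref{th:main}, which confines the discriminants to the two explicit pairs
\[
\{\Delta,\Delta'\}\in\bigl\{\{-23,\,-4\cdot 23\},\ \{-31,\,-4\cdot 31\}\bigr\}.
\]
For each of these pairs, \cite[Theorem~1.2]{Lu17} precisely rules out any equation of the form $Ax^m+By^n=C$ with $A,B,C\in\mathbb{Q}^\times$, so $h=3$ is impossible and $h\le 2$ follows.

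Since both ingredients are already established---Theorem~\ref{th:main} in the present paper and \cite[Theorem~1.2]{Lu17} in the subsequent work of Luca and Riffaut---there is no real obstacle in the deduction itself: the argument is essentially a two-step citation. The only point that merits a brief check is that the hypotheses of \cite[Theorem~1.2]{Lu17} line up with the present set-up, which they do, since both results assume distinct singular moduli and nonzero rational coefficients. All the substantive difficulty has been pushed into the two cited results, so I would expect the written proof to occupy no more than a short paragraph.
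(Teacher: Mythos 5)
Your proposal is correct and is exactly the paper's own deduction: the author derives this theorem by combining Theorem~\ref{th:main} (which gives $\Q(x)=\Q(y)$ of degree $h\le 3$ and pins the $h=3$ case to the two discriminant pairs $\{-23,-92\}$ and $\{-31,-124\}$) with \cite[Theorem 1.2]{Lu17}, which eliminates those two pairs. Nothing further is needed.
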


The assumption ${C\ne 0}$ is seemingly restrictive, but, in fact, the case ${C=0}$ is contained in Theorem~\ref{th:main-2} below. 

For the equation ${x^my^n=A}$, where ${A\in \Q^\ast}$, we do not state any conjecture, because the result we obtain is  best possible.  Moreover, this time we allow the exponents~$m$ and~$n$ to be arbitrary non-zero integers, positive or negative. 

\begin{theoreme}

\label{th:main-2}
Let $x,y$ be two non-zero singular moduli, and $m,n$ two non-zero integers. Assume that ${x^my^n\in\mathbb{Q}^{\times}}$. Then we have one of the following options:

\begin{description}

\item[(the equality case)] $x=y$ and $m+n=0$.

\item[(the rational case)] ${x,y\in \Q^\ast}$;

\item[(the quadratic case)] $m=n$ and $x,y$ are of degree $2$ and conjugate over $\mathbb{Q}$.

\end{description}

\end{theoreme}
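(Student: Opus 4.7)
My plan is to generalize the Galois/size-estimate argument of Bilu--Luca--Pizarro-Madariaga (the proof of Theorem~\ref{thhyp}) to accommodate the exponents $m,n$. The rationality of $x^my^n$ is Galois-invariant, so
\[\sigma(x)^m\sigma(y)^n = x^my^n\qquad\bigl(\sigma\in\Gal(\bar\Q/\Q)\bigr),\]
which, on taking complex absolute values, yields $m\log|\sigma(x)|+n\log|\sigma(y)|=m\log|x|+n\log|y|$ for every such~$\sigma$. Combined with the sharp $q$-expansion estimate $\bigl||j(\tau_{(a,b,c)})|-e^{\pi\sqrt{|\Delta|}/a}\bigr|=O(1)$, this localizes the absolute value of each Galois conjugate of $x$ in a single size class indexed by $a\in\{1,\ldots,\lfloor\sqrt{|\Delta|/3}\rfloor\}$, with a unique \emph{dominant} conjugate (attached to the principal form, $a=1$) whenever $|\Delta|$ is not small.

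Writing $\Delta,\Delta'$ for the discriminants of $x,y$, I would split the argument into three cases. First, if $\Delta\ne\Delta'$ with $|\Delta|\ge|\Delta'|$, the ring class fields $H_\Delta,H_{\Delta'}$ are essentially linearly disjoint over $\Q$, which lets me pick $\sigma$ sending $x$ to its dominant conjugate and $y$ to any prescribed conjugate; letting $\sigma(y)$ range then forces all $|\sigma(y)|$ to coincide, which by the size dichotomy is only possible if $h_{\Delta'}=1$, i.e., $y\in\Q$. A symmetric argument applied to $x^m\in\Q^\times$ then places $x\in\Q$ as well, putting us in the rational case. Second, if $x=y$, the relation becomes $x^{m+n}\in\Q^\times$: either $m+n=0$ (the equality case), or all conjugates of $x$ have the same absolute value, and the same dichotomy forces $h_\Delta=1$ (rational case). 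Third, if $\Delta=\Delta'$, $x\ne y$, and $h_\Delta\ge 2$, I write $y=\rho(x)$ for a nontrivial $\rho\in\Gal(H_\Delta/\Q(\sqrt\Delta))$; size comparisons on the class-group orbit should force $h_\Delta=2$ and $\rho$ to be complex conjugation, whence $y=\bar x$. Substituting back gives $(\bar x/x)^{m-n}=1$, and since $\bar x/x$ is not a root of unity for any non-real singular modulus $x$ of degree $2$ (verifiable on the finite list of $29$ such pairs), one concludes $m=n$.

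The main obstacle I anticipate is the third case. For $m=n=1$, as in~\cite{Yu15}, the relation $|\sigma(x)|\,|\sigma(y)|=|x|\,|y|$ is tight enough to pin down $y=\bar x$ by an elementary comparison; for general $(m,n)$, the weighted equality $m\log|\sigma(x)|+n\log|\sigma(y)|=\text{const}$ admits a priori more balanced configurations, and one must rule out those in which a large conjugate of $x$ is compensated by a correspondingly small conjugate of $y$. This will likely require a finer pigeonhole on pairs of size classes of $(\sigma(x),\sigma(y))$, combined with the structural constraint that $\sigma(y)$ is determined by $\sigma(x)$ via the fixed class-group element $\rho$. A residual finite set of exceptional small discriminants is then eliminated by direct computation, as in~\cite{Yu15}, using tables of singular moduli of class number at most~$2$.
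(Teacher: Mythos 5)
There is a genuine gap, and it sits at the heart of your first case. You assert that for $\Delta\ne\Delta'$ the ring class fields $H_\Delta,H_{\Delta'}$ are ``essentially linearly disjoint over $\Q$,'' so that $\sigma$ can send $x$ to its dominant conjugate while $\sigma(y)$ ranges freely. This is false in exactly the configurations that carry all the difficulty. The hypothesis $x^my^n\in\Q^\times$ forces $\Q(x^m)=\Q(y^n)$ and hence (by the dominant-conjugate argument, Lemma~\ref{lemme:power-j}) $\Q(x)=\Q(y)$; the Galois orbit of $(x,y)$ therefore has only $h=h_\Delta=h_{\Delta'}$ elements, and $\sigma(y)$ is completely determined by $\sigma(x)$. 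The surviving configurations with $\Delta\ne\Delta'$ are $\Delta/\Delta'\in\{4,1/4\}$ (same CM field, where the point lies on $Y_0(2)$) and a finite list of pairs with $\Q(\tau)\ne\Q(\tau')$ but $\Q(j(\tau))=\Q(j(\tau'))$; your argument would dismiss all of these in one line, whereas they occupy most of the actual proof. A second, related problem: the estimate $\bigl||j(\tau_{(a,b,c)})|-e^{\pi\sqrt{|\Delta|}/a}\bigr|=O(1)$ gives no lower bound on $|\sigma(y)|$ once $e^{\pi\sqrt{|\Delta'|}/a'}$ is itself $O(1)$ (singular moduli can be arbitrarily close to $0$, and $j(\zeta_3)=0$), so for $mn>0$ the quantity $n\log|\sigma(y)|$ is not controlled by size classes alone; one needs a separate lower bound for nonzero singular moduli (the paper imports \cite[Proposition 2.2]{Yu15} for this).

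Your third case ($\Delta=\Delta'$, $h\ge 3$) is honestly flagged as the main obstacle but is not actually resolved, and the ``finer pigeonhole'' you anticipate is where the real work lies. The paper's route is: split on the sign of $mn$; for $mn<0$ reduce to the $C=0$ situation of Theorem~\ref{th:main}, conjugate so both coordinates are dominant (which incidentally forces $\Delta\ne\Delta'$ there, so this subcase disappears), and use explicit conjugates on $Y_0(2)$; for $mn>0$ and $h>6$ bound $A=x^my^n$ from below via the dominant pair and from above via a conjugate with $a,a'\ge 3$, which confines $\Delta,\Delta'$ to a finite list. That residual list contains discriminants of class number up to $8$ (e.g.\ $-71$, $-95$), so tables of singular moduli of degree at most $2$ do not suffice; the elimination is done by an explicit multiplicative-independence test in the ring class field (comparing $\mathfrak p$-adic valuations of $\alpha/\alpha^\sigma$ and $\beta^\sigma/\beta$ and checking roots of unity), supplemented by Matveev's bound and continued fractions where exponent bounds are needed. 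Your overall skeleton (Galois invariance of $|x^my^n|$ plus size classes) is the right starting point, but as written the proposal proves the theorem only in the cases that were already easy.
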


It is interesting to compare this theorem with a recent result of Pila and Tsimerman~\cite{PT17}, who proved that, given a positive integer~$k$, there exist at most finitely many $k$-tuples of pairwise distinct multiplicatively dependent singular moduli. This result is, however, non-effective.

Our calculations were performed using the \textsf{PARI/GP} package \cite{Pari}. The sources are available from the author.

\section{Preliminaries and main lemmas}

\subsection{Estimations of the $j$-invariant function}

We denote by $\mathcal{D}$ the standard fundamental domain of the action of $\SL(2,\mathbb{Z})$ on $\mathbb{H}$, that is the closed hyperbolic triangle with vertices
\[\zeta_3=\frac{-1+\sqrt{-3}}{2},\quad\zeta_6=\frac{1+\sqrt{3}}{2},\quad\infty.\]
For $\tau\in\mathbb{H}$, we denote $q(\tau)=\mathrm{e}^{2i\pi\tau}$, and we will simply write $q=q(\tau)$ when there is no ambiguity.

When $j(\tau)$ is large, it is approximatively of the same magnitude as $q^{-1}$. The following is Lemma 1 from \cite{Yu13}, which makes this explicit.

\begin{lemme}
\label{lemme:estim-j}

For $\tau\in\mathcal{D}$, we have
\[\left||j(\tau)|-|q|^{-1}\right|\leq 2079.\]

\end{lemme}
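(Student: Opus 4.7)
The plan is to use the classical Fourier expansion
\[j(\tau) = q^{-1} + 744 + \sum_{n \geq 1} c_n q^n,\]
where the $c_n$ are the non-negative integer Fourier coefficients of $j$, starting with $c_1 = 196884$, $c_2 = 21493760$. The reverse triangle inequality then gives
\[\left||j(\tau)| - |q|^{-1}\right| \leq |j(\tau) - q^{-1}| \leq 744 + \sum_{n \geq 1} c_n |q|^n.\]
For $\tau \in \mathcal{D}$ one has $\im \tau \geq \sqrt{3}/2$, hence $|q| \leq e^{-\pi\sqrt{3}}$, and it is enough to prove
\[744 + \sum_{n \geq 1} c_n e^{-n\pi\sqrt{3}} \leq 2079.\]

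To establish this I would split the series at some cutoff $N$ into a head and a tail. The head $\sum_{n \leq N} c_n e^{-n\pi\sqrt{3}}$ I would evaluate explicitly: since $e^{-\pi\sqrt{3}} \approx 0.0043$ is very small, the terms decay rapidly (one already has $c_1 e^{-\pi\sqrt{3}} < 860$ and $c_2 e^{-2\pi\sqrt{3}} < 410$), and a modest choice such as $N \approx 10$ yields a head sitting well inside the budget $1335 = 2079 - 744$. For the tail $\sum_{n > N} c_n e^{-n\pi\sqrt{3}}$, I would invoke a classical effective Petersson--Rademacher-type estimate of the shape $c_n \leq K\, e^{4\pi\sqrt{n}}/n^{3/4}$ with an explicit constant $K$, so that each summand is dominated by $K\, e^{4\pi\sqrt{n} - n\pi\sqrt{3}}/n^{3/4}$; since the exponent tends to $-\infty$ super-linearly, the tail is a convergent series with negligible remainder.

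The only delicate point is calibrating $N$ so that the explicit head plus the crude tail estimate stays comfortably under $2079$. There is no conceptual obstacle: the stated bound carries substantial slack (the actual supremum of $|j(\tau) - q^{-1}|$ on $\mathcal{D}$ is rather smaller), so this is a routine bookkeeping step once the head/tail dichotomy is in place.
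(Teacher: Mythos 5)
The paper does not prove this lemma at all: it is quoted verbatim as Lemma~1 of \cite{Yu13}, and your argument is precisely the argument used there, namely the reverse triangle inequality applied to the Fourier expansion ${j(\tau)=q^{-1}+744+\sum_{n\ge1}c_nq^n}$ together with ${|q|\le \mathrm{e}^{-\pi\sqrt3}}$ on $\mathcal D$. So the approach is correct and is the intended one. One substantive correction, though: your closing claim that ``the stated bound carries substantial slack'' is false, and the rounded values you quote would actually sink the computation. One has
\[
744+\sum_{n\ge1}c_n\mathrm{e}^{-n\pi\sqrt3}\approx 744+853.2+403.6+70.3+7.1+0.5+0.03+\cdots\approx 2078.85,
\]
so the constant $2079$ is nothing but the ceiling of this sum; the available margin is about $0.15$, not hundreds. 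In particular, if you bound the first two terms by $860$ and $410$ as in your parenthetical, then already ${744+860+410+70>2079}$ and the ``budget'' of $1335$ is blown at $n=3$. The head must therefore be evaluated with near-exact values of $c_n\mathrm{e}^{-n\pi\sqrt3}$ for the first half-dozen terms (the tail for, say, ${n\ge 8}$ is indeed negligible and any effective Petersson--Rademacher bound, or even the crude ${c_n\le \mathrm{e}^{4\pi\sqrt n}}$, disposes of it). This is still only bookkeeping, but it is tight bookkeeping, not routine slack.
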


This estimate has the following consequence.

\begin{lemme}
\label{lemme:estim-j-bis}

For $\tau\in\mathcal{D}\setminus\{\mathrm{e}^{i\pi/3},\mathrm{e}^{2i\pi/3}\}$, we have
\[|j(\tau)|=|q|^{-1}\mathrm{e}^{v(q)},\]
with $v(q)$ a real number satisfying $|v(q)|\leq 2883|q|$ as soon as $\im\tau\geq\log 4158/2\pi\approx 1.326$. Furthermore, if $\im\tau\geq\log 5766/2\pi\approx 1.378$, then $|v(q)|\leq 1/2$.

\end{lemme}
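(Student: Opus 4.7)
The identity $|j(\tau)|=|q|^{-1}e^{v(q)}$ forces the definition $v(q)=\log\bigl(|j(\tau)|\cdot|q|\bigr)$, so the first step is to check this is well-defined and real. Since $\tau\in\mathcal{D}$ one has $|q|<1$, and the exclusion of $e^{2i\pi/3}$ removes the only zero of $j$ on $\mathcal{D}$, so $|j(\tau)|\cdot|q|>0$ and its logarithm is real. (The exclusion of $e^{i\pi/3}$ plays no role in this argument; it is irrelevant under the hypotheses $\im\tau\ge \log 4158/2\pi$ anyway.) Thus the real content of the lemma is the quantitative bound on $|v(q)|$.

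The plan is to multiply the estimate of Lemma~\ref{lemme:estim-j} by $|q|$, giving
\[\bigl||j(\tau)|\cdot|q|-1\bigr|\le 2079|q|.\]
Writing $|j(\tau)|\cdot|q|=1+t$ with $|t|\le 2079|q|$, the hypothesis $\im\tau\ge\log 4158/2\pi$ gives $|q|\le 1/4158$ and hence $|t|\le 1/2$. On $[-1/2,1/2]$ the function $t\mapsto 1+t$ takes values in $[1/2,3/2]$, so $v(q)=\log(1+t)$ is real, and the key elementary inequality
\[|\log(1+t)|\le (2\log 2)\,|t|\qquad (|t|\le 1/2)\]
(valid because $t\mapsto|\log(1+t)|/|t|$ is maximized at $t=-1/2$, where it equals $2\log 2$) yields
\[|v(q)|\le (2\log 2)\cdot 2079|q|\le 2883|q|,\]
using $(2\log 2)\cdot 2079\approx 2882.1<2883$. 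For the second assertion, $\im\tau\ge\log 5766/2\pi$ means $|q|\le 1/5766$, so $|v(q)|\le 2883/5766 = 1/2$.

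There is essentially no obstacle: the lemma is a routine corollary of Lemma~\ref{lemme:estim-j} combined with a first-order expansion of $\log(1+t)$. The one place to be careful is the constant in $|\log(1+t)|\le C|t|$: a naive bound $C=2$ (from the geometric series $|\log(1+t)|\le|t|/(1-|t|)$) would give only $|v(q)|\le 4158|q|$, insufficient for the clean numerical threshold. One must use the sharper optimal constant $C=2\log 2$, and this is exactly what makes the numerical constants $4158=2\cdot 2079$ and $5766=2\cdot 2883$ line up so that $|v(q)|\le 1/2$ drops out of the second hypothesis with no further work.
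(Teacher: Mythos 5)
Your proposal is correct and follows essentially the same route as the paper: apply Lemma~\ref{lemme:estim-j}, write $|j(\tau)|=|q|^{-1}(1+u(q))$ with $|u(q)|\le 2079|q|$, note that $\im\tau\ge\log 4158/2\pi$ forces $|u(q)|\le 1/2$, and conclude via the sharp inequality $|\log(1+x)|\le 2\log 2\,|x|$ on $[-1/2,1/2]$. Your additional remarks (why $\mathrm{e}^{2i\pi/3}$ must be excluded, and the explicit deduction of the final claim from $|q|\le 1/5766$) only make explicit what the paper leaves implicit.
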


\begin{proof}

From Lemma \ref{lemme:estim-j}, we have
\[|q|^{-1}(1-2079|q|)\leq|j(\tau)|\leq|q|^{-1}(1+2079|q|),\]
then
\[|j(\tau)|=|q|^{-1}(1+u(q)),\]
with $|u(q)|\leq 2079|q|$.
If $\im\tau\geq\log 4158/2\pi$, then $|u(q)|\leq 1/2$. As $|\log(1+x)|\leq 2\log 2|x|$, for all $x\in\,]-1/2,1/2[$, we deduce that
\[|v(q)|=|\log(1+u(q))|\leq 2\log 2|u(q)|\leq 2883|q|. \qedhere\]

\end{proof}

Finally, we need a bound on the logarithm of $|j(\tau)|$.

\begin{lemme}
\label{lemme:estim-logj}

For $\tau\in\mathcal{D}$, we have
\[\log|j(\tau)|\leq 9\im\tau.\]

\end{lemme}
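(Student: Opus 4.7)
The plan is to combine Lemma~\ref{lemme:estim-j} with the explicit fact that $\im\tau\geq\sqrt{3}/2$ throughout $\mathcal{D}$, reducing the inequality to a one-variable calculus check on $[\sqrt{3}/2,\infty)$.

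First I would set $y=\im\tau$ and invoke Lemma~\ref{lemme:estim-j} in the direction
\[|j(\tau)|\leq|q|^{-1}+2079=\mathrm{e}^{2\pi y}+2079.\]
Taking logs, the target bound $\log|j(\tau)|\leq 9y$ follows as soon as
\[\varphi(y):=9y-\log\!\bigl(\mathrm{e}^{2\pi y}+2079\bigr)\geq 0\qquad\text{for all }y\geq\sqrt{3}/2,\]
the lower bound $\sqrt{3}/2$ being the minimal imaginary part attained on $\mathcal{D}$ (at the corners $\zeta_3,\zeta_6$).

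Next I would check that $\varphi$ is monotone increasing on this range by computing
\[\varphi'(y)=9-\frac{2\pi\mathrm{e}^{2\pi y}}{\mathrm{e}^{2\pi y}+2079}>9-2\pi>0,\]
where the first inequality is just $\mathrm{e}^{2\pi y}<\mathrm{e}^{2\pi y}+2079$. Hence it suffices to verify the endpoint inequality $\varphi(\sqrt{3}/2)\geq 0$, i.e.
\[\mathrm{e}^{9\sqrt{3}/2}\geq\mathrm{e}^{\pi\sqrt{3}}+2079,\]
which is an explicit numerical comparison (roughly $2425\geq 231+2079$) and hence holds.

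No genuine obstacle is expected: the bound $2\pi<9$ built into the statement gives the exponential growth rate on the right-hand side ample room to dominate the additive error $2079$, and the worst case at $\im\tau=\sqrt{3}/2$ is barely satisfied but does hold. The only thing to be mildly careful about is using the correct lower bound on $\im\tau$ coming from the geometry of $\mathcal{D}$ (which is $\sqrt{3}/2$, not~$1$), but this does not affect the argument.
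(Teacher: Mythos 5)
Your proof is correct and follows essentially the same route as the paper: both start from Lemma~\ref{lemme:estim-j}, reduce to a monotonicity argument in $\im\tau$ (the paper via the decreasing function $x\mapsto\log(2079+x)/\log x$, you via the increasing function $9y-\log(\mathrm{e}^{2\pi y}+2079)$), and land on the same endpoint check at $\im\tau=\sqrt{3}/2$, namely $2079+\mathrm{e}^{\pi\sqrt{3}}\leq\mathrm{e}^{9\sqrt{3}/2}$. The numerical verification and the use of the correct lower bound $\im\tau\geq\sqrt{3}/2$ are both sound.
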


\begin{proof}

First, using again Lemma \ref{lemme:estim-j}, we have
\[\log|j(\tau)|\leq\log\left(2079+|q|^{-1}\right).\]
Since the function $x\mapsto\log(2079+x)/\log x$ is decreasing on $]1,+\infty[$, and $|q|^{-1}=\mathrm{e}^{2\pi\im\tau}\geq\mathrm{e}^{\pi\sqrt{3}}$, we get
\[\log\left(2079+|q|^{-1}\right)\leq\log\left(|q|^{-1}\right)\frac{\log\left(2079+\mathrm{e}^{\pi\sqrt{3}}\right)}{\pi\sqrt{3}}\leq 9\im\tau. \qedhere\]

\end{proof}

\subsection{The conjugates of $j(\tau)$}

Let $j(\tau)$ be a singular modulus of discriminant $\Delta$ and degree $h$ over $\mathbb{Q}$. It is well-known that the conjugates of $j(\tau)$ over $\mathbb{Q}$ can be described explicitly. We briefly recall this description.

Denote by $T_{\Delta}$ the set of triples of integers $(a,b,c)$ such that
\[
\begin{cases}
\pgcd(a,b,c)=1,\\
\Delta=b^2-4ac,\\
-a<b\leq a<c\quad\text{or}\quad 0\leq b\leq a=c.
\end{cases}
\]

\begin{proposition}
\label{prop:conj-j}

The conjugates of $j(\tau)$ over $\mathbb{Q}$ are given by
\[j\left(\frac{-b+\sqrt{\Delta}}{2a}\right),\quad(a,b,c)\in T_{\Delta}.\]
In particular, $|T_{\Delta}|=h$.

\end{proposition}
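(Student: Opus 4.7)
The plan is to identify the set $T_\Delta$ with the set of $\SL(2,\Z)$-equivalence classes of primitive positive-definite integral binary quadratic forms of discriminant $\Delta$, and then apply the classical theory of the Hilbert class polynomial, as developed in \cite[§7, §11, §13]{Co89}.

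First I would observe that each triple $(a,b,c)\in T_\Delta$ encodes the primitive form $Q_{a,b,c}(X,Y)=aX^2+bXY+cY^2$. Since $\Delta<0$, the conditions $a\le c$ (together with $a\mid a$ and the implicit positivity coming from $b^2-4ac<0$ with $a>0$) make this form positive-definite, and the inequalities $-a<b\le a<c$ or $0\le b\le a=c$ are precisely Gauss's reducedness conditions. Consequently $T_\Delta$ is in bijection with the form class group $\mathrm{Cl}(\OO_\Delta)$, so $|T_\Delta|=h_\Delta=h$. To each $(a,b,c)\in T_\Delta$ I associate the unique root $\tau_{a,b,c}=(-b+\sqrt{\Delta})/(2a)$ of $a\tau^2+b\tau+c=0$ lying in $\HH$; the reducedness conditions translate exactly to $\tau_{a,b,c}\in\mathcal{D}$, and this gives a bijection between $T_\Delta$ and the CM points of discriminant $\Delta$ inside $\mathcal{D}$.

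Next I would invoke the main theorem of complex multiplication: the Hilbert class polynomial
\[H_\Delta(X)=\prod_{(a,b,c)\in T_\Delta}\bigl(X-j(\tau_{a,b,c})\bigr)\]
lies in $\Z[X]$ and is irreducible over $\Q$, with any single $j(\tau_{a,b,c})$ as a root (see \cite[Theorem 11.1]{Co89}). Since our given singular modulus $j(\tau)$ has discriminant $\Delta$, some $\SL(2,\Z)$-translate of $\tau$ lies in $\mathcal{D}$ and equals one of the $\tau_{a,b,c}$; thus $j(\tau)$ is a root of $H_\Delta$, and by irreducibility $H_\Delta$ is its minimal polynomial over $\Q$. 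The conjugates of $j(\tau)$ over $\Q$ are therefore precisely the other roots of $H_\Delta$, namely the $j(\tau_{a,b,c})$ with $(a,b,c)\in T_\Delta$, giving the claimed list of cardinality $h$.

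There is essentially no obstacle: the statement is a repackaging of the classical description of the Galois conjugates of a singular modulus via reduced binary quadratic forms. The only care needed is in checking that the asymmetric boundary condition ``$-a<b\le a<c$ or $0\le b\le a=c$'' in the definition of $T_\Delta$ matches the standard normalization of the fundamental domain $\mathcal{D}$ (which identifies the vertical sides $\mathrm{Re}\,\tau=\pm1/2$ via $\tau\mapsto\tau+1$ and the two halves of the unit-circle arc via $\tau\mapsto-1/\tau$), so that each class of forms is counted exactly once; this is purely bookkeeping.
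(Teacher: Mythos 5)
Your argument is correct and is exactly the classical route the paper itself takes: it cites \cite[Proposition~2.5]{Yu15} and \cite{Co89}, which establish the statement via the bijection between $T_\Delta$ and reduced primitive positive-definite forms of discriminant $\Delta$ together with the irreducibility of the Hilbert class polynomial. Your bookkeeping check that the condition ``$-a<b\le a<c$ or $0\le b\le a=c$'' matches the standard reduction conditions is the right (and only) point of care.
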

For a proof, see, for instance, \cite[Proposition 2.5]{Yu15}, which uses results from \cite{Co89}.

In the set $T_{\Delta}$, there exists exactly one triple $(a,b,c)$ with $a=1$, given explicitly by
\[\left(1,r_4(\Delta),\frac{r_4(\Delta)-\Delta}{4}\right),\]
where $r_4(\Delta)\in\{0,1\}$ is defined by $\Delta\equiv r_4(\Delta)\bmod 4$. The corresponding conjugate of $j(\tau)$,
\[j\left(\frac{-r_4(\Delta)+\sqrt{\Delta}}{2}\right),\]
will be called the \emph{dominant $j$-value} of discriminant $\Delta$, and has the property that it is much larger in absolute value than all its other conjugates.

\begin{lemme}
\label{lemme:j-dom}

Let $x_0$ be the dominant $j$-value of discriminant $\Delta$, with $|\Delta|\geq 11$. Let $x$ be a conjugate of $x_0$ over $\mathbb{Q}$, $x\neq x_0$. Then $|x|\leq 0.1|x_0|$.

\end{lemme}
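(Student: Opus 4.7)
The dominant $j$-value $x_0$ corresponds to the point $\tau_0 = (-r_4(\Delta) + \sqrt{\Delta})/2$, which has imaginary part $\sqrt{|\Delta|}/2$, and hence $|q(\tau_0)| = e^{-\pi\sqrt{|\Delta|}}$. Every other conjugate is of the form $x = j(\tau)$ with $\tau = (-b+\sqrt{\Delta})/(2a)$ for some $(a,b,c) \in T_\Delta$ with $a \geq 2$ (since $a=1$ gives back $\tau_0$). A direct check shows that such a $\tau$ lies in $\mathcal{D}$ (the inequalities on $(a,b,c)$ translate exactly to $|\operatorname{Re}\tau|\leq 1/2$ and $|\tau|\geq 1$), and moreover $\operatorname{Im}\tau = \sqrt{|\Delta|}/(2a) \leq \sqrt{|\Delta|}/4$, giving $|q(\tau)| \geq e^{-\pi\sqrt{|\Delta|}/2}$.

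My plan is then to feed both estimates into Lemma~\ref{lemme:estim-j}. From the lower direction I obtain
\[
|x_0| \geq |q(\tau_0)|^{-1} - 2079 = e^{\pi\sqrt{|\Delta|}} - 2079,
\]
while from the upper direction I obtain
\[
|x| \leq |q(\tau)|^{-1} + 2079 \leq e^{\pi\sqrt{|\Delta|}/2} + 2079.
\]
Combining these two reduces the lemma to the inequality
\[
e^{\pi\sqrt{|\Delta|}/2} + 2079 \leq 0.1\bigl(e^{\pi\sqrt{|\Delta|}} - 2079\bigr),
\]
which I check by direct substitution at $|\Delta|=11$ (where the left side is about $2262$ and the right side about $3143$) and observe that the gap only grows for larger $|\Delta|$, since the right-hand side grows like $e^{\pi\sqrt{|\Delta|}}$ while the left grows only like $e^{\pi\sqrt{|\Delta|}/2}$. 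Formally, one can argue that the function $\varphi(t) = 0.1(e^{\pi t} - 2079) - (e^{\pi t/2} + 2079)$ is positive at $t=\sqrt{11}$ and has positive derivative thereafter.

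The verification is essentially mechanical, so there is no real obstacle; the only subtlety to watch for is the hypothesis $|\Delta| \geq 11$, which is used precisely to make the base inequality work at the boundary case, and to ensure that the triple $(1, r_4(\Delta), (r_4(\Delta)-\Delta)/4)$ satisfies $c > a = 1$ so that the dominant value is indeed distinguished in $T_\Delta$. If $h=1$ the statement is vacuous, so one may harmlessly assume there exists at least one non-dominant conjugate; the argument above then applies uniformly.
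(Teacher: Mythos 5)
Your proof is correct and is essentially the argument the paper intends: it declines to reprove the lemma, noting only that it is a consequence of Lemma~\ref{lemme:estim-j} and citing \cite[Lemma 3.5]{Yu13}, and your derivation (dominant conjugate has $|q|^{-1}=e^{\pi|\Delta|^{1/2}}$, every other conjugate has $a\ge 2$ hence $|q|^{-1}\le e^{\pi|\Delta|^{1/2}/2}$, then apply the $\pm 2079$ bounds and check the resulting inequality at $|\Delta|=11$) is exactly that consequence. Your numerical check at the boundary ($2262$ versus $3143$) and the monotonicity remark are both sound, so nothing further is needed.
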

It is a consequence of Lemma \ref{lemme:estim-j}. For a complete proof, we refer to \cite[Lemma 3.5]{Yu13}.

Using this property of the dominant $j$-value, we can prove that the field generated over $\mathbb{Q}$ by a singular modulus is the same as the one generated by any of its powers.

\begin{lemme}
\label{lemme:power-j}

Let $x$ be a singular modulus of discriminant $\Delta$, with $|\Delta|\geq 11$, and $n$ a non-zero integer. Then $\mathbb{Q}(x)=\mathbb{Q}(x^n)$.

\end{lemme}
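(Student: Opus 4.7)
The plan is to exploit the dominance property recorded in Lemma~\ref{lemme:j-dom}: among all Galois conjugates of a singular modulus of discriminant~$\Delta$ with $|\Delta|\geq 11$, the dominant $j$-value~$x_0$ is strictly largest in absolute value, by a factor of at least~$10$. The inclusion $\mathbb{Q}(x^n)\subseteq\mathbb{Q}(x)$ is automatic (with $x\neq 0$, since $|\Delta|\geq 11$ rules out the only vanishing singular modulus), so it suffices to prove $d:=[\mathbb{Q}(x):\mathbb{Q}(x^n)]=1$.

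Let $\{x_0,x_1,\ldots,x_{h-1}\}$ be the full Galois orbit of $x$ over~$\mathbb{Q}$, with $x_0$ the dominant $j$-value. The map $x_i\mapsto x_i^n$ from this orbit onto the Galois orbit of $x^n$ is a $\Gal(\bar\Q/\Q)$-equivariant surjection between two transitive $\Gal(\bar\Q/\Q)$-sets, so all its fibres have the same cardinality~$d$. In particular, the fibre over $x_0^n$ contains $x_0$ together with $d-1$ further elements.

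Any such additional element~$y$ in the fibre over $x_0^n$ satisfies $y^n=x_0^n$ and $y\neq x_0$, so $y/x_0$ is a non-trivial $n$-th root of unity and in particular $|y|=|x_0|$. This contradicts the bound $|y|\leq 0.1|x_0|$ from Lemma~\ref{lemme:j-dom}, forcing $d=1$. I do not foresee any real obstacle: the whole argument is a short consequence of the dominance estimate, the only mildly delicate point being to work with the \emph{full} Galois orbit (rather than with $x$ alone) so that the dominant conjugate $x_0$ is available to invoke Lemma~\ref{lemme:j-dom}.
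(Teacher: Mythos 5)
Your proof is correct and follows essentially the same route as the paper: both arguments reduce to producing a conjugate $y\neq x_0$ of the dominant $j$-value satisfying $y^n=x_0^n$ and then contradicting the dominance bound of Lemma~\ref{lemme:j-dom}. The only (harmless) difference is presentational — you phrase the Galois step via the equal-sized fibres of the equivariant power map, which handles negative $n$ uniformly, whereas the paper first reduces to $n\geq 2$ and picks an explicit $\sigma\in\Gal(L/\Q(x^n))\setminus\Gal(L/\Q(x))$.
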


\begin{proof}

Since $\mathbb{Q}(x^n)=\mathbb{Q}(x^{-n})$, it is sufficient to prove the lemma for $n\geq 2$. Assume that $\mathbb{Q}(x)\neq\mathbb{Q}(x^n)$, which means that $\mathbb{Q}(x^n)$ is a strict subfield of $\mathbb{Q}(x)$. Let $L$ be the Galois closure of the extension $\mathbb{Q}(x)/\mathbb{Q}$. By the Galois correspondence Theorem, the Galois group $\Gal(L/\mathbb{Q}(x))$ is a strict subgroup of $\Gal(L/\mathbb{Q}(x^n))$. Let $\sigma$ be any element of $\Gal(L/\mathbb{Q}(x^n))\setminus\Gal(L/\mathbb{Q}(x))$. We have $\sigma(x)\neq x$ and $\sigma(x^n)=\sigma(x)^n=x^n$. Now, choose $\sigma_0\in\Gal(L/\mathbb{Q})$ such that $\sigma_0(x)$ is the dominant $j$-value of discriminant $\Delta$. We then have $(\sigma_0\sigma)(x)\neq\sigma_0(x)$ and $(\sigma_0\sigma)(x)^n=\sigma_0(x)^n$. This is in contradiction with Lemma \ref{lemme:j-dom}, which claims that $|(\sigma_0\sigma)(x)|<|\sigma_0(x)|$, so that $|(\sigma_0\sigma)(x)^n|<|\sigma_0(x)^n|$. \qedhere

\end{proof}

At present, we would like to describe more precisely the set $T_{\Delta}$ in general, i.e. finding explicit triples $(a,b,c)$ of $T_{\Delta}$ for some small values of $a$. It turns out that, when $\Delta\equiv 1\bmod 8$, then for any power of $2$, say $2^r$ with $r\geq 1$, if $\Delta$ is large enough, there exists a triple $(a,b,c)$ in $T_{\Delta}$ with $a=2^r$. It is a mere consequence of Hensel's Lemma, according to which $\Delta$ is a square in $\mathbb{Z}_2$. As we are only interested in the small values of $a$, we claim the following result.

\begin{lemme}
\label{lemme:T_Delta}

Assume that $\Delta\equiv 1\bmod 8$. Then the set $T_{\Delta}$ contains exactly:

\begin{itemize}

\item two triples $(a,b,c)$ with $a=2$ when $|\Delta|\geq 23$;

\item two triples $(a,b,c)$ with $a=4$ when $|\Delta|\geq 71$;

\item two triples $(a,b,c)$ with $a=8$ when $|\Delta|\geq 239$.

\end{itemize}

\end{lemme}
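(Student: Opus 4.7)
The plan is to perform a direct residue-class analysis of the congruence $b^2 \equiv \Delta \pmod{4a}$ for $a \in \{2,4,8\}$. The hypothesis $\Delta \equiv 1 \pmod 8$ ensures, via Hensel's lemma applied to $X^2 - \Delta$ at the simple roots $\pm 1$ in $\mathbb{Z}/8\mathbb{Z}$, that this congruence has exactly four solutions modulo $2^{r+2}$ for every $r \geq 1$. These solutions can be grouped as $\pm b_0$ and $\pm(2a - b_0)$ for a unique $b_0$ with $0 < b_0 < a$, so exactly two of them, namely $\pm b_0$, lie in the window $(-a,a]$ prescribed by the definition of $T_\Delta$.

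Since $\Delta$ is odd, any such $b_0$ is odd, so $\pgcd(a, b_0) = 1$ and the coprimality condition $\pgcd(a, b_0, c) = 1$ is automatic once $c$ is defined. Setting $c = (b_0^2 - \Delta)/(4a)$, an integer by construction, two cases arise: when $c > a$, both triples $(a, \pm b_0, c)$ lie in $T_\Delta$; when $c = a$, the extra condition $b \geq 0$ selects only $(a, b_0, c)$. The ``two triples'' in the lemma therefore correspond precisely to the strict inequality $c > a$, equivalent to $|\Delta| > 4a^2 - b_0^2$.

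Carrying out the case analysis: for $a = 2$ one has $b_0 = 1$ and the condition $|\Delta| > 15$ combined with $|\Delta| \equiv 7 \pmod 8$ gives the threshold $|\Delta| \geq 23$. For $a = 4$ the two residue classes $\Delta \bmod 16 \in \{1, 9\}$ yield $b_0 \in \{1, 3\}$ respectively, and taking the worst of the corresponding conditions $|\Delta| > 63$ and $|\Delta| > 55$ against the progression $|\Delta| \equiv 7 \pmod 8$ gives the threshold $|\Delta| \geq 71$. For $a = 8$ the four residue classes $\Delta \bmod 32 \in \{1, 9, 17, 25\}$ yield $b_0 \in \{1, 3, 7, 5\}$; tracking the smallest admissible $|\Delta|$ in each class leads to the threshold $|\Delta| \geq 239$.

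The main obstacle is purely computational: one must carefully enumerate the residue classes modulo $32$ in the $a=8$ case and intersect them with the arithmetic progression $|\Delta| \equiv 7 \pmod 8$ to extract the threshold. Beyond this bookkeeping, no deeper idea is required once Hensel's lemma supplies the existence and the precise structure of the four square roots of $\Delta$ modulo $2^{r+2}$.
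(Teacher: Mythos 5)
Your overall strategy is the same as the paper's: for each $a\in\{2,4,8\}$ you enumerate the solutions of $b^2\equiv\Delta\pmod{4a}$ lying in the window $(-a,a]$, set $c=(b^2-\Delta)/(4a)$, and decide between two triples ($c>a$), one triple ($c=a$, only $b\geq 0$ survives) and none ($c<a$). The Hensel gloss is a pleasant way to organize the four square roots modulo $2^{r+2}$, and your treatment of $a=2$ and $a=4$ is correct and matches the paper's.

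The $a=8$ case, however, contains a genuine error. By your own criterion, both triples $(8,\pm b_0,c)$ lie in $T_\Delta$ exactly when $|\Delta|>256-b_0^2$; for the four classes $\Delta\equiv 1,9,17,25\pmod{32}$, with $b_0=1,3,7,5$ respectively, this reads $|\Delta|>255$, $|\Delta|>247$, $|\Delta|>207$, $|\Delta|>231$. A uniform threshold must cover the \emph{worst} class (as you correctly did for $a=4$), i.e.\ must exceed $255$, which after intersecting with $|\Delta|\equiv 7\pmod 8$ gives $|\Delta|\geq 263$ --- not $239$, which is the threshold of the most favourable class $\Delta\equiv 17\pmod{32}$. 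Concretely, $\Delta=-255\equiv 1\pmod{32}$ and $\Delta=-247\equiv 9\pmod{32}$ both satisfy $\Delta\equiv 1\pmod 8$ and $|\Delta|\geq 239$, yet yield $c=(b_0^2+|\Delta|)/32=8=a$, so $T_\Delta$ contains only the single triple $(8,1,8)$, resp.\ $(8,3,8)$, with first entry $8$. So your derivation of the third bullet does not go through --- and in fact that bullet is itself false for these two discriminants; the paper's proof dismisses the $a=8$ case as ``identical to the previous reasoning'' and does not catch this. (The slip is harmless downstream, since the paper only ever needs the existence of at least one triple with $a=8$, or works with $|\Delta'|\geq 1024$.)
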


\begin{proof}

First, notice that, if $(a,b,c)\in T_{\Delta}$, then $b$ must be a solution of the equation $\Delta\equiv b^2\bmod 4a$ satisfying $|b|\leq a$. Therefore, for a fixed value of $a$, solving this equation gives all possible triples $(a,b,c)$ with the specified value of $a$, provided that they verify $\pgcd(a,b,c)=1$, and either $-a<b\leq a\leq c$ or $0\leq b\leq a=c$.

By assumption, $\Delta\equiv 1\bmod 8$. The unique solutions $b\in\mathbb{Z}$ of the equation $b^2\equiv 1\bmod 8$ satisfying $|b|\leq 2$ are $b=1$ and $b=-1$. Let $c=(\Delta-1)/8\in\mathbb{N}$, so that $\Delta=b^2-8c$. If $|\Delta|\geq 23$, then $c\geq 3$. Thus, the two triples $(2,-1,c)$ and $(2,1,c)$ belong to $T_{\Delta}$, and they are the only ones whose first component is equal to $2$.

Next, we have either $\Delta\equiv 1\bmod 16$ or $\Delta\equiv 9\bmod 16$.
\begin{itemize}

\item If $\Delta\equiv 1\bmod 16$, the unique solutions $b\in\mathbb{Z}$ of the equation $b^2\equiv 1\bmod 16$ satisfying $|b|\leq 4$ are still $b=1$ and $b=-1$. Let $c=(\Delta-1)/16\in\mathbb{N}$, so that $\Delta=b^2-16c$. If $|\Delta|\geq 79$, then $c\geq 5$.

\item If $\Delta\equiv 9\bmod 16$, the unique solutions $b\in\mathbb{Z}$ of the equation $b^2\equiv 9\bmod 16$ satisfying $|b|\leq 4$ are $b=3$ and $b=-3$. Let $c=(\Delta-9)/16\in\mathbb{N}$, so that $\Delta=b^2-16c$. If $|\Delta|\geq 71$, then $c\geq 5$.

\end{itemize}
Thus, if $|\Delta|\geq 71$, there are exactly two triples $(a,b,c)$ in $T_{\Delta}$ with $a=4$.

Proving the last assertion of the lemma is identical to the previous reasoning. \qedhere

\end{proof}

We shall also recall that the set $T_{\Delta}$ contains at most two triples $(a,b,c)$ with $a=2$ in general, and in particular no triple when $\Delta\equiv 4\bmod 16$. See \cite[Proposition 2.6]{Yu15} for more details.

\subsection{The modular curve $Y_0(N)$ and the special case $N=2$}

Let $N$ be a positive integer. The \emph{classical modular curve} of level $N$, denoted by $Y_0(N)$, is the algebraic curve corresponding to the quotient of $\mathbb{H}$ by the action of the congruence subgroup
\[\Gamma_0(N)=\left\{\left(
\begin{matrix}
a & b\\
c & d
\end{matrix}
\right)\in\SL(2,\mathbb{Z})\,;\,c\equiv 0\bmod N\right\}\]
of $\SL(2,\mathbb{Z})$. It can be realized as an algebraic curve in $\mathbb{C}^2$ of equation
\[\Phi_N(x,y)=0,\]
where $\Phi_N(X,Y)\in\mathbb{C}[X,Y]$ is called the \emph{classical modular polynomial} of level $N$.

For our purpose, we need a more precise description of the polynomial $\Phi_N$, especially for the case $N=2$. We consider the following set of matrices
\[C(N)=\left\{\left(
\begin{matrix}
a & b\\
0 & d
\end{matrix}
\right)\,;\,ad=N,\,a>0,\,0\leq b<d,\,\pgcd(a,b,d)=1\right\}.\]
For
\[\sigma=\left(
\begin{matrix}
a & b\\
0 & d
\end{matrix}
\right)\in C(N)\]
and $\tau\in\mathbb{H}$, we write
\[\sigma\tau=\frac{a\tau+b}{d}\in\mathbb{H}.\]
For all $\tau\in\mathbb{H}$, we have
\[\Phi_N(X,j(\tau))=\prod_{\sigma\in C(N)}(X-j(\sigma\tau)).\]
We refer to section 11.B of \cite{Co89} for more details. When $N=2$, the set $C(N)$ can easily be determined, namely
\[C(2)=\left\{
\left(
\begin{matrix}
2 & 0\\
0 & 1
\end{matrix}
\right),\,
\left(
\begin{matrix}
1 & 0\\
0 & 2
\end{matrix}
\right),\,
\left(
\begin{matrix}
1 & 1\\
0 & 2
\end{matrix}
\right)
\right\}.\]
Thus,
\[\Phi_2(X,j(\tau))=(X-j(2\tau))\left(X-j\left(\frac{\tau}{2}\right)\right)\left(X-j\left(\frac{\tau+1}{2}\right)\right).\]

Now, let $(j(\tau),j(\tau'))$ be a CM-point. Then $(j(\tau),j(\tau'))$ lies on the curve $Y_0(2)$ if and only if
\[j(\tau)\in\left\{j(2\tau'),j\left(\frac{\tau'}{2}\right),j\left(\frac{\tau'+1}{2}\right)\right\}.\]
Assume that both $\tau$ and $\tau'$ belong to $\mathcal{D}$.

\begin{itemize}

\item The equality $j(\tau)=j(2\tau')$ holds if and only if there exists $\gamma\in\SL(2,\mathbb{Z})$ such that $j(\tau)=j(\gamma(2\tau'))$. Since $\tau$ belongs to $\mathcal{D}$, and the same goes for $2\tau'$ up to a translation $z\mapsto z+\varepsilon\in\SL(2,\mathbb{Z})$ by some $\varepsilon\in\{0,\pm 1\}$, this condition is equivalent to $\tau=2\tau'+\varepsilon$ for some $\varepsilon\in\{0,\pm 1\}$.

\item Likewise, if $|\tau'|\geq 2$, then $\tau'/2$ belongs to $\mathcal{D}$, else we can transform $\tau'/2$ into an element of $\mathcal{D}$ by applying successively the inversion $z\mapsto-1/z\in\SL(2,\mathbb{Z})$ and a translation $z\mapsto z+\varepsilon\in\SL(2,\mathbb{Z})$ by some $\varepsilon\in\{0,\pm 1\}$. Therefore, the equality $j(\tau)=j(\tau'/2)$ holds if and only if $\tau=\tau'/2$ or $\tau=-2/\tau'+\varepsilon$ for some $\varepsilon\in\{0,\pm 1\}$. By the same way, the equality $j(\tau)=j((\tau'+1)/2)$ holds if and only if $\tau=(\tau'+1)/2$ or $\tau=-1/((\tau'+1)/2-\varepsilon')+\varepsilon$ for some $\varepsilon'\in\{0,1\}$ and $\varepsilon\in\{0,\pm 1\}$.

\end{itemize}

\begin{remarque}
\label{rmq:test-Y02}

When $\tau$ and $\tau'$ both belong to a quadratic number field $K$, this gives us an exact algorithmic way of determining whether $(j(\tau),j(\tau'))$ lies on the curve $Y_0(2)$ or not: we have to check if $\tau=\gamma\tau'$ for one of the transformation $\gamma\in\SL(2,\mathbb{Z})$ mentioned above.

\end{remarque}

It also provides us the following useful Lemma.

\begin{lemme}
\label{lemme:Y02}

Let $(x,y)=(j(\tau),j(\tau'))$ be a CM-point, with both $\tau$ and $\tau'$ belonging to $\mathcal{D}$. Assume that $(x,y)$ lies on the curve $Y_0(2)$ and that $\im\tau'\geq 2$. Then $\im\tau/\im\tau'\in\{2,1/2\}$.

\end{lemme}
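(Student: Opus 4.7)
The plan is to invoke the three-case analysis spelled out in the paragraph immediately preceding the lemma. Since $(j(\tau),j(\tau'))$ lies on $Y_0(2)$ with $\tau,\tau'\in\mathcal{D}$, the point $j(\tau)$ must equal one of $j(2\tau')$, $j(\tau'/2)$, or $j((\tau'+1)/2)$. The hypothesis $\im\tau'\geq 2$ together with $\tau'\in\mathcal{D}$ gives the key quantitative bound $|\tau'|\geq\im\tau'\geq 2$, which will be used repeatedly.

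In the first case, the discussion preceding the lemma shows $\tau=2\tau'+\varepsilon$ for some $\varepsilon\in\{0,\pm 1\}$, so $\im\tau=2\im\tau'$ at once. In the remaining two cases, I would argue that the ``inversion branches'' $\tau=-2/\tau'+\varepsilon$ and $\tau=-1/((\tau'+1)/2-\varepsilon')+\varepsilon$ listed in the text are ruled out: they arise precisely when $|\tau'/2|<1$ or $|(\tau'+1)/2-\varepsilon'|<1$, since otherwise no inversion is needed to bring those points into $\mathcal{D}$. But $|\tau'|\geq 2$ gives $|\tau'/2|\geq 1$, and $\im((\tau'+1)/2)=\im\tau'/2\geq 1$ gives $|(\tau'+1)/2-\varepsilon'|\geq 1$ for every $\varepsilon'\in\{0,1\}$. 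Consequently, $\tau$ can differ from $\tau'/2$ (respectively $(\tau'+1)/2$) only by an integer translation used to place the real part in $[-1/2,1/2]$; such a translation preserves the imaginary part, so $\im\tau=\im\tau'/2$ in both cases.

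The only mildly delicate point is the boundary situation $|\tau'|=2$, which, combined with $\tau'\in\mathcal{D}$ and $\im\tau'\geq 2$, forces $\tau'=2i$. Here $\tau'/2=i$ coincides with $-2/\tau'$, so the inversion branch degenerates to $\tau'/2$ and still contributes the ratio $1/2$. I do not expect any genuine obstacle: the proof is essentially careful bookkeeping of the $\SL(2,\mathbb{Z})$-action on the upper half-plane, the crucial observation being that a large imaginary part of $\tau'$ automatically places $\tau'/2$ and $(\tau'+1)/2$ inside $\mathcal{D}$ up to translation, so no inversion (and hence no change to the imaginary part) is ever required.
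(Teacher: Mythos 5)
Your argument is correct and follows exactly the route the paper intends: the lemma is stated as a direct consequence of the three-case discussion of $Y_0(2)$ preceding it, and you carry out precisely that case analysis, using $|\tau'|\geq\im\tau'\geq 2$ to discard the inversion branches so that only translations (which preserve the imaginary part) remain. The treatment of the degenerate boundary case $\tau'=2i$ is a nice touch but, as you note, changes nothing.
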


\subsection{Height of a singular modulus}

We shall recall some basic facts about the height of an algebraic number. Let $\alpha$ be a non-zero algebraic number of degree $d$ over $\mathbb{Q}$, and $\alpha_1=\alpha,\alpha_2,\dots,\alpha_d$ be all of its conjugates in $\bar{\mathbb{Q}}$. The logarithmic height of $\alpha$, denoted by $\mathrm{h}(\alpha)$, is defined to be
\[\mathrm{h}(\alpha)=\frac{1}{d}\sum_{k=1}^d\log\max\{1,|\alpha_k|\}.\]
Here are some useful properties of the logarithmic height.

\begin{itemize}

\item For any non-zero algebraic number $\alpha$ and $\lambda\in\mathbb{Q}$, we have $\mathrm{h}(\alpha^{\lambda})=|\lambda|\mathrm{h}(\alpha)$. In particular, $\mathrm{h}(1/\alpha)=\mathrm{h}(\alpha)$. See \cite[Lemma 1.5.18]{Bo06}.

\item For any two non-zero algebraic numbers $\alpha$ and $\beta$, we have $\mathrm{h}(\alpha\beta)\leq\mathrm{h}(\alpha)+\mathrm{h}(\beta)$.

\end{itemize}

For our problem, we are interested in bounding the height of a singular modulus. In his article, Kühne gives a bound on the height of a singular modulus depending on its discriminant $\Delta$, its degree $h$ over $\mathbb{Q}$, and some arbitrary parameter $\varepsilon>0$, see \cite[Lemma 3]{Ku13}. We only need the following simpler bound.

\begin{lemme}
\label{lemme:height-j}

Let $x$ be a singular modulus of discriminant $\Delta$. We have
\[\mathrm{h}(x)\leq\frac{9|\Delta|^{1/2}}{2}.\]

\end{lemme}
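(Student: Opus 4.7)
The plan is to combine the explicit description of the Galois conjugates of $x$ provided by Proposition \ref{prop:conj-j} with the logarithmic bound of Lemma \ref{lemme:estim-logj}. Let $h$ denote the degree of $x$ over $\mathbb{Q}$. By Proposition \ref{prop:conj-j}, the $h$ conjugates of $x$ are exactly the values $j(\tau_{a,b,c})$ with
\[\tau_{a,b,c}=\frac{-b+\sqrt{\Delta}}{2a},\qquad (a,b,c)\in T_\Delta.\]

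First I would check that each such $\tau_{a,b,c}$ lies in the standard fundamental domain $\mathcal{D}$. This is immediate from the defining inequalities of $T_\Delta$: the condition $-a<b\leq a$ forces $\re\tau_{a,b,c}=-b/(2a)\in[-1/2,1/2)$, while the identity
\[|\tau_{a,b,c}|^2=\frac{b^2+|\Delta|}{4a^2}=\frac{b^2+(4ac-b^2)}{4a^2}=\frac{c}{a}\geq 1\]
follows from $c\geq a$. Hence Lemma \ref{lemme:estim-logj} applies and yields
\[\log\max\{1,|j(\tau_{a,b,c})|\}\leq \max\{0,9\im\tau_{a,b,c}\}=9\im\tau_{a,b,c}=\frac{9\sqrt{|\Delta|}}{2a}.\]

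Then I would plug this into the definition of the logarithmic height and bound each $1/a$ trivially by $1$:
\[\mathrm{h}(x)=\frac{1}{h}\sum_{(a,b,c)\in T_\Delta}\log\max\{1,|j(\tau_{a,b,c})|\}\leq \frac{1}{h}\sum_{(a,b,c)\in T_\Delta}\frac{9\sqrt{|\Delta|}}{2a}\leq \frac{9\sqrt{|\Delta|}}{2},\]
where the last inequality uses $|T_\Delta|=h$. There is no real obstacle here: the estimate is intentionally coarse (one could sharpen it by exploiting that most triples have $a$ moderately large), but the statement asks only for this weaker bound, so the one-line majorization $1/a\leq 1$ suffices.
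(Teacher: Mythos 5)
Your proof is correct and takes essentially the same route as the paper's: express $\mathrm{h}(x)$ via the conjugates from Proposition \ref{prop:conj-j}, bound each term by $9|\Delta|^{1/2}/(2a)$ using Lemma \ref{lemme:estim-logj}, and majorize $1/a\leq 1$. The only difference is that you explicitly verify that each $\tau_{a,b,c}$ lies in $\mathcal{D}$ (so that Lemma \ref{lemme:estim-logj} applies), a point the paper leaves implicit.
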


\begin{proof}

Let $h$ be the degree of $x$ over $\mathbb{Q}$. From Proposition \ref{prop:conj-j}, we can write
\[\mathrm{h}(x)=\frac{1}{h}\sum_{(a,b,c)\in T_{\Delta}}\log\max\left\{1,\left|j\left(\frac{-b+\sqrt{\Delta}}{2a}\right)\right|\right\}.\]
Then, by applying Lemma \ref{lemme:estim-logj}, we have
\[\mathrm{h}(x)\leq\frac{9|\Delta|^{1/2}}{2h}\sum_{(a,b,c)\in T_{\Delta}}\frac{1}{a}\leq\frac{9|\Delta|^{1/2}}{2}. \qedhere\]

\end{proof}

\subsection{Linear forms in the logarithms of algebraic numbers}

We need to estimate linear forms in the logarithms of algebraic numbers, that is expressions of the form
\[\Lambda=b_1\log\alpha_1+\dots+b_r\log\alpha_r,\]
where $b_1,\dots,b_r\in\mathbb{Z}$ and $\alpha_1,\dots,\alpha_r$ are non-zero algebraic numbers. Here, $\log\alpha_1,\dots,\log\alpha_r$ are arbitrary fixed non-zero values of the logarithms.

The article \cite{Ma00} of Matveev gives a bound on the modulus of such an expression. The following Theorem is a particular case and a slightly modified version of \cite[Corollary 2.3]{Ma00}.

\begin{theoreme}
\label{th:matveev}

Assume that $\alpha_1,\dots\alpha_r$ are not all real and $\Lambda\neq 0$. Let $d=[\mathbb{Q}(\alpha_1,\dots,\alpha_r):\mathbb{Q}]$, $H=\max\{|b_1|,\dots,|b_r|\}$, and $A_1,\dots,A_r$ be real numbers such that
\[A_j\geq\max\{\mathrm{h}(\alpha_j),\lvert\log\alpha_j|/d,0.16/d\},\quad j\in\{1,\dots,r\}.\]
Then
\begin{equation}
\label{eq:matveev}
\log|\Lambda|>-2^{6r+20}d^{2+r}A_1\dots A_r\log(\mathrm{e}d)\log(\mathrm{e}H).
\end{equation}

\end{theoreme}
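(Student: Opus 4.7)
The plan is to derive this statement by a direct citation of Matveev's \cite[Corollary 2.3]{Ma00} together with a notational translation. I would not attempt to reprove the underlying linear forms in logarithms bound itself: it is a deep result whose proof rests on the full Baker--Matveev machinery (construction of an auxiliary function with many zeros, interpolation determinant estimates, Kummer-theoretic descent), and nothing in the present paper suggests re-doing it from scratch. Matveev's bound is used here as a black box, and my job is simply to verify that the convenient form above is genuinely what his corollary gives.

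What remains is a careful matching of hypotheses and a clean-up of constants. The assumption that the $\alpha_j$ are not all real corresponds to the "complex" case of Matveev's statement, which is precisely what produces the constant $2^{6r+20}$ in the exponent; in the totally real case the constant coming out of \cite{Ma00} is worse, and the restriction in the hypothesis is exactly what allows us to take the sharper value. The numbers $A_j$ in our statement are Matveev's modified heights, with the third slot $0.16/d$ being a harmless numerical loosening of a $1/d$-type threshold appearing in the original: it prevents $A_j$ from collapsing when $\alpha_j$ is too close to $1$. The power $d^{r+2}$ factors as $d^2$ (the base degree factor) times one copy of $d$ per modified height, which is the standard shape in Matveev.

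The only step that is not completely automatic is the re-expression of the $H$-dependent factor. In \cite{Ma00} the dependence on the $b_j$ enters through a logarithm of a somewhat intricate expression involving the $A_j$ and $d$; using $A_j\geq 0.16/d$ and $H=\max|b_j|$, one bounds it by a constant multiple of $\log(eH)+\log(ed)$, so that the product $\log(ed)\log(eH)$ dominates it after the constants are absorbed into the already generous $2^{6r+20}$.

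There is no real obstacle here beyond bookkeeping: the entire depth of the statement sits in \cite{Ma00}, and my "proof" is a verification that the cleaner form displayed in \eqref{eq:matveev} is implied by Matveev's original corollary. If I had to name a main difficulty, it would only be the routine care needed to line up Matveev's notation with the one used in the applications below, so that later uses of Theorem~\ref{th:matveev} plug in directly without further rewriting.
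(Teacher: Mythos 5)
Your proposal matches the paper exactly: the paper gives no proof of Theorem~\ref{th:matveev}, presenting it as ``a particular case and a slightly modified version of \cite[Corollary 2.3]{Ma00}'', and your reduction (renormalizing Matveev's modified heights by a factor of $d$, which is where the $d^{2+r}=d^2\cdot d^r$ comes from, and absorbing the remaining logarithmic factors into the constant $2^{6r+20}$) is the right bookkeeping. One small inaccuracy worth noting: in Matveev's corollary the totally real case actually yields a \emph{sharper} constant than the complex case, so the hypothesis that the $\alpha_j$ are not all real is not what ``allows the sharper value'' --- it is simply the case needed in the applications (where $\log(-1)=i\pi$ occurs), and the stated constant must be large enough to cover it.
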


\subsection{Multiplicative independence of algebraic integers}
\label{ssec:mult_ind}

In this part, we develop an algorithmic way of proving that two algebraic numbers are multiplicatively independent. This will be required to complete our proofs of both Theorems \ref{th:main} and \ref{th:main-2}.

Recall that two algebraic numbers $\alpha,\beta$ of a number field $L$ are said to be \emph{multiplicatively independent} if, for all integers $m,n$, the equality $\alpha^m=\beta^n$ implies $m=n=0$. Otherwise, they are said to be \emph{multiplicatively dependent}. Notice that, if $\alpha^m=-\beta^n$ for some integers $m,n$ non-zero simultaneously, then $\alpha$ and $\beta$ are multiplicatively dependent, since $\alpha^{2m}=\beta^{2n}$ in this case.

Assume that $\alpha$ and $\beta$ are multiplicatively dependent, and set $m,n$ two integers non-zero simultaneously such that $\alpha^m=\beta^n$. Assume moreover that $|\alpha|,|\beta|\neq 1$, so that $m,n\neq 0$. Choose a prime ideal $\mathfrak{p}$ of $\mathcal{O}_L$ dividing both fractional ideals $\alpha\mathcal{O}_L$ and $\beta\mathcal{O}_L$. If no such ideal exists, then either we can directly conclude that $\alpha$ and $\beta$ are multiplicatively independent, or they are units. It turns out that this last possibility does not occur in our calculations, but it can still be checked in general by using complex embeddings. Otherwise, we have
\[\frac{m}{n}=\frac{v_{\mathfrak{p}}(\beta\mathcal{O}_L)}{v_{\mathfrak{p}}(\alpha\mathcal{O}_L)},\]
where $v_{\mathfrak{p}}(\cdot)$ is the valuation at the prime ideal $\mathfrak{p}$. This allows to compute a theoretical value $k/l$ under irreducible form of $m/n$. Now, remarking that
\[\left(\frac{\alpha^k}{\beta^l}\right)^m=1,\]
that is $\alpha^k/\beta^l$ is a root of unity of $L$, compute the set of roots of unity of $L$, and compare it to $\alpha^k/\beta^l$. If none of them is equal to $\alpha^k/\beta^l$, we can conclude that $\alpha$ and $\beta$ are multiplicatively independent.

This method can be extended to show that $\alpha^m\beta^n\notin\mathbb{Q}^{\times}$, for all couples of integers $(m,n)\neq (0,0)$; this is precisely the purpose of Theorem \ref{th:main-2}. Indeed, if $\alpha^m\beta^n\in\mathbb{Q}^{\times}$, then for all automorphism $\sigma$ of the Galois group $\Gal(L/\mathbb{Q})$, we have
\[\left(\frac{\alpha}{\alpha^{\sigma}}\right)^m=\left(\frac{\beta^{\sigma}}{\beta}\right)^n,\]
which means that $\alpha/\alpha^{\sigma}$ and $\beta/\beta^{\sigma}$ are multiplicatively dependent. We may assume here that $L$ contains all the conjugates of $\alpha$ and $\beta$ over $\mathbb{Q}$. Therefore, all we have to do is to find a convenient automorphism $\sigma$ contradicting the latter assertion by applying the previous algorithm.

\section{Proof of Theorem \ref{th:main}}
\label{sec:th_main}

\subsection{Reduction}
\label{ssec:red}

Let $j(\tau),j(\tau')$ be two distinct singular moduli of respective discriminants $\Delta$ and $\Delta'$, and $m,n$ two positive integers. As in Theorem \ref{th:main}, we assume there exist $A,B,C\in\mathbb{Q}^{\times}$ such that
\begin{equation}
\label{eq:main}
Aj(\tau)^m+Bj(\tau')^n=C.
\end{equation}
Dividing by $C$, we can choose $C=1$ in order to simplify notations.

Equation \eqref{eq:main} implies that $\mathbb{Q}(j(\tau)^m)=\mathbb{Q}(j(\tau')^n)$. By Lemma \ref{lemme:power-j}, we deduce that $\mathbb{Q}(j(\tau))=\mathbb{Q}(j(\tau'))$, and in particular, that the discriminants $\Delta$ and $\Delta'$ have the same class number $h=h(\Delta)=h(\Delta')$. Moreover, the Galois orbit of $(j(\tau),j(\tau'))$ over $\mathbb{Q}$ has exactly $h$ elements, and each conjugate of $j(\tau)$ occurs exactly once as the first coordinate of a point in the orbit, just as each conjugate of $j(\tau')$ occurs exactly once as the second coordinate. Frow now on, we assume that $h\geq 3$.

Using a similar reasoning to the proof of Lemma 5.2 of \cite{Al15}, we can easily show that the CM-point $(j(\tau),j(\tau'))$ is conjugate over $\mathbb{Q}$ to a point whose both coordinates are the dominant $j$-values of respective discriminants $\Delta$ and $\Delta'$. Indeed, let $(j(\tau_1),j(\tau_1'))$ be the conjugate of $(j(\tau),j(\tau'))$ with $j(\tau_1)$ dominant. If $j(\tau_1')$ is not dominant, there exist a second conjugate $(j(\tau_2),j(\tau_2'))$ with $j(\tau_2')$ dominant. Eventually, since $h\geq 3$, let $(j(\tau_3),j(\tau_3'))$ be a third conjugate with $j(\tau_3),j(\tau_3')$ both not dominant. All three points $(j(\tau_i)^m,j(\tau_i')^n)$, $i\in\{1,2,3\}$, are collinear, so that
\[\left|
\begin{matrix}
1 & j(\tau_1)^m & j(\tau_1')^n\\
1 & j(\tau_2)^m & j(\tau_2')^n\\
1 & j(\tau_3)^m & j(\tau_3')^n
\end{matrix}
\right|=0.\]
The determinant above is a sum of $6$ terms: the “dominant term” $j(\tau_1)^mj(\tau_2')^n$ and $5$ other terms, each of them being at most $0.1|j(\tau_1)^mj(\tau_2')^n|$ in absolute value, due to lemma \ref{lemme:j-dom}. Hence, the determinant cannot vanish, a contradiction. Consequently, we can assume, without loss of generality, that $j(\tau)$ and $j(\tau')$ are both dominant.

On the one hand, if $\mathbb{Q}(\tau)\neq\mathbb{Q}(\tau')$, then all possible couples $(\Delta,\Delta')$ are given by Table 4.1 of \cite{Al15}, as a consequence of \cite[Corollary 4.2]{Al15}. We will see later in Section \ref{subsec:Qtau_eq} how to eliminate this case.

On the other hand, if $\mathbb{Q}(\tau)=\mathbb{Q}(\tau')$, then by Proposition 4.3 of \cite{Al15}, we have $\Delta\in\{\Delta',4\Delta',\Delta'/4\}$. If $\Delta=\Delta'$, then $j(\tau)$ and $j(\tau')$ are conjugate over $\mathbb{Q}$; however, since they are both dominant, this would imply that $j(\tau)=j(\tau')$, which is excluded. Therefore, we can assume that $\Delta=4\Delta'$. As $j(\tau)$ and $j(\tau')$ are both dominant, we may choose
\[\tau=\frac{-r_4(4\Delta')+\sqrt{4\Delta'}}{2}=\sqrt{\Delta'}\quad\text{and}\quad\tau'=\frac{-r_4(\Delta')+\sqrt{\Delta'}}{2}.\]
Hence, we have $\tau'=\frac{1}{2}\gamma\tau$, with
\[\gamma=\left(
\begin{matrix}
1 & -r_4(\Delta')\\
0 & 1
\end{matrix}
\right)\in\SL(2,\mathbb{Z}).\]
Hence, the CM-point $(j(\tau),j(\tau'))$ belongs to the modular curve $Y_0(2)$, and so do all of its conjugates over $\mathbb{Q}$. This observation will be crucial for our proof.

\subsection{The case $\mathbb{Q}(\tau)=\mathbb{Q}(\tau')$}

In this section, we consider the case $\mathbb{Q}(\tau)=\mathbb{Q}(\tau')$. Before we procede, we shall summarize the different assumptions we will next work with:
\begin{gather}
\label{eq:assum-colin} Aj(\tau)^m+Bj(\tau')^n=1,\\
\label{eq:assum-hsup3} h\geq 3,\\
\label{eq:assum-delta} \Delta=4\Delta',\\
\label{eq:assum-jdom} j(\tau)\text{ and }j(\tau')\text{ are both dominant}.
\end{gather}

Besides, we will freely apply the following properties of the principal complex logarithm:
\begin{gather*}
\forall M\in\,]0,1[,\,\forall z\in\mathbb{C},\,|z-1|\leq M\implies\lvert\log z|\leq\frac{|z-1|}{1-M},\\
\forall M\in\,]0,1[,\,\forall z\in\mathbb{C},\,|z+1|\leq M\implies\lvert\log z-i\pi|\leq\frac{|z+1|}{1-M},
\end{gather*}
which are mere applications of the mean value Theorem. This last property is also useful:
\[\forall s,A>1,\,s\log s<A\implies s<\left(1+\frac{1}{\mathrm{e}}\right)\frac{A}{\log A}.\]

\subsubsection{Eliminating big discriminants}

The first part of the proof is to exhibit a contradiction to the previous assumptions for every discriminant $\Delta$ large enough. That is what we call “eliminating” big discriminants. Afterwards, it only remains finitely many discriminants to deal with.

\sloppypar{The idea is the following. If $(x_1,y_1),(x_2,y_2),(x_3,y_3)$ are three conjugates over $\mathbb{Q}$ of the CM-point $(j(\tau),j(\tau'))$, then the points $(x_i^m,y_i^n)$, $i\in\{1,2,3\}$, all belong to the line of equation $Ax+By=1$. Using the collinearity of these points allows us to bound the quantity $m/n$. Producing different bounds with different set of collinear points leads to a contradiction.}

That is why we need at first to exhibit some explicit conjugates of the CM-point $(j(\tau),j(\tau'))$. Denote $(x_1,y_1)=(j(\tau),j(\tau'))$. Since $\Delta=4\Delta'$, we have $\Delta'\equiv 1\bmod 8$ by the “class number formula”, see \cite[section 3.2.2]{Yu15}. Then, by Lemma \ref{lemme:T_Delta}, assuming that $|\Delta'|\geq 239$, there exist $3$ other conjugates $(x_2,y_2),(x_3,y_3),(x_4,y_4)$ of the following form:

\begin{table}[H]

\caption{Known conjugates of $(j(\tau),j(\tau'))$ over $\mathbb{Q}$, with unknown values of $a_2,a_3,a_4$, and $|\Delta'|\geq 239$}

\[
\begin{array}{r|c|c}
i & x_i & y_i\\
\hline
1 & j(\sqrt{\Delta'}) & j\left(\frac{-1+\sqrt{\Delta'}}{2}\right)\\
2 & j\left(\frac{-b_2}{2a_2}+\frac{\sqrt{\Delta'}}{a_2}\right) & j\left(\frac{-1+\sqrt{\Delta'}}4\right)\\
3 & j\left(\frac{-b_3}{2a_3}+\frac{\sqrt{\Delta'}}{a_3}\right) & j\left(\frac{-b_3'+\sqrt{\Delta'}}{8}\right)\\
4 & j\left(\frac{-b_4}{2a_4}+\frac{\sqrt{\Delta'}}{a_4}\right) & j\left(\frac{-b_4'+\sqrt{\Delta'}}{16}\right).
\end{array}
\]

\end{table}
Now, we would like to determine the values of $a_2,a_3,a_4$. According to Lemma \ref{lemme:Y02}, if $\im((-b_4'+\sqrt{\Delta'})/16)\geq 2$, i.e. $|\Delta'|\geq 1024$, then $a_i/2^i\in\{2,1/2\}$.

\begin{itemize}

\item For $i=2$, that means $a_2\in\{8,1\}$, the case $a_2=1$ being excluded since $x_2$ is not dominant. Therefore, $a_2=8$.

\item Similarly, for $i=3$, we have $a_3\in\{16,2\}$. However, since $\Delta\equiv 4\bmod 16$, there is no triple $(a_3,b_3,c_3)$ in $T_{\Delta}$ with $a_3=2$, so that $a_3=16$.

\item Finally, for $i=4$, we have $a_4\in\{32,4\}$. The two conjugates of $j(\tau)$ corresponding to the triples $(a_4,b_4,c_4)$ of $T_{\Delta}$ with $a_4=4$ are already associated to $y_2$ and $\bar{y_2}$, so that $a_4=32$.

\end{itemize}
Hence, we have complete expressions for the previous conjugates when $|\Delta'|\geq 1024$.

\begin{table}[!h]

\caption{Known conjugates of $(j(\tau),j(\tau'))$ over $\mathbb{Q}$, with $|\Delta'|\geq 1024$}

\[
\begin{array}{r|c|c}
i & a_i^{-1} & a_i'^{-1}\\
\hline
1 & 1 & 1/2\\
2 & 1/8 & 1/4\\
3 & 1/16 & 1/8\\
4 & 1/32 & 1/16.
\end{array}
\]

\footnotesize

Explanations: for each $i$, the corresponding conjugates $x_i$ and $y_i$ are of the form $x_i=j\left(\ast+\frac{\sqrt{\Delta'}}{a_i}\right)$ and $y_i=j\left(\ast+\frac{\sqrt{\Delta'}}{a_i'}\right)$, where $\ast$ is a real value.

\label{table:conj}

\end{table}

\noindent Remark that we have not determined the values of $b_2,b_3,b_4,b_3',b_4'$, but we do not need to know them to estimate all of the $x_i$'s and $y_i$'s, since the magnitude of $|j(\tau)q(\tau)|$, for $\tau\in\mathcal{D}$, only depends on $\im\tau$.

For this section only, we assume that $|\Delta'|\geq 1024$, and thus table \ref{table:conj} above gives us four explicit conjugates of the CM-point $(j(\tau),j(\tau'))$ over $\mathbb{Q}$.

Using Lemma \ref{lemme:estim-j-bis}, we have the estimations below:
\begin{equation}
\label{eq:estimxiyi}
\left|\frac{x_i}{x_j}\right|=\mathrm{e}^{2\pi|\Delta'|^{1/2}(a_i^{-1}-a_j^{-1})+O(1)},\quad\left|\frac{y_i}{y_j}\right|=\mathrm{e}^{2\pi|\Delta'|^{1/2}(a_i'^{-1}-a_j'^{-1})+O(1)},\quad 1\leq i<j\leq 4,\\
\end{equation}
with $|O(1)|\leq 1$ in each estimation.

By collinearity of the points $(x_i^m,y_i^n)$, $i\in\{1,2,3\}$, we have
\begin{equation}
\label{eq:colin-det}
\begin{vmatrix}
1 & x_1^m & y_1^n\\
1 & x_2^m & y_2^n\\
1 & x_3^m & y_3^n
\end{vmatrix}=0,
\end{equation}
which can be rewritten as
\begin{equation}
\label{eq:colin}
\left(\frac{x_1}{x_2}\right)^m\left(\frac{y_1}{y_2}\right)^{-n}-1=\frac{\left(\frac{y_3}{y_2}\right)^n+\left(\frac{x_3}{x_1}\right)^m-\left(\frac{y_3}{y_1}\right)^n-\left(\frac{x_3}{x_2}\right)^m}{1-\left(\frac{y_3}{y_2}\right)^n-\left(\frac{x_3}{x_1}\right)^m}.
\end{equation}
By estimations \eqref{eq:estimxiyi}, we have
\[\left|\left(\frac{y_3}{y_2}\right)^n+\left(\frac{x_3}{x_1}\right)^m\right|\leq\mathrm{e}^{-\pi\sqrt{1024}/4+1}+\mathrm{e}^{-15\pi\sqrt{1024}/8+1}\leq 0.001,\]
thus
\begin{equation}
\label{eq:maj-mn}
\left|\left(\frac{x_1}{x_2}\right)^m\left(\frac{y_1}{y_2}\right)^{-n}-1\right|\leq 2.004\left(\mathrm{e}^{-n\pi|\Delta'|^{1/2}/4+n}+\mathrm{e}^{-m\pi|\Delta'|^{1/2}/8+m}\right)\leq\frac{1}{2}.
\end{equation}

From \eqref{eq:estimxiyi} and \eqref{eq:maj-mn}, we can deduce a bound on the quantity $m/n$. Indeed, we have
\[\left|\left(\frac{7m}{4}-\frac{n}{2}\right)\pi|\Delta'|^{1/2}+mO(1)+nO(1)\right|\leq 4.008\left(\mathrm{e}^{-n\pi|\Delta'|^{1/2}/4+n}+\mathrm{e}^{-m\pi|\Delta'|^{1/2}/8+m}\right)\leq 0.001,
\]
then
\[\left|\frac{7m}{4}-\frac{n}{2}\right|\leq\frac{0.001+m+n}{\pi|\Delta'|^{1/2}}\leq 0.001+0.01(m+n),\]
and eventually
\begin{equation}
\label{eq:bornmn1}
0.279\leq\frac{m}{n}\leq 0.294.
\end{equation}

The previous calculations can be repeated, exploiting this time the collinearity of the points $(x_i^m,y_i^n)$, $i\in\{1,3,4\}$: that is, replacing $(x_2,y_2)$ by $(x_3,y_3)$, $(x_3,y_3)$ by $(x_4,y_4)$, and using estimations \eqref{eq:estimxiyi} for these corresponding conjugates. This leads to
\[\left|\left(\frac{x_1}{x_3}\right)^m\left(\frac{y_1}{y_3}\right)^{-n}-1\right|\leq 2.004\left(\mathrm{e}^{-n\pi|\Delta'|^{1/2}/8+n}+\mathrm{e}^{-m\pi|\Delta'|^{1/2}/16+m}\right),\]
then
\[\left|\frac{15m}{8}-\frac{3n}{4}\right|\leq 0.001+0.01(m+n),\]
and eventually
\begin{equation}
\label{eq:bornmn2}
0.392\leq\frac{m}{n}\leq 0.409.
\end{equation}

The two inequalities \eqref{eq:bornmn1} and \eqref{eq:bornmn2} contradict each other. Therefore, this eliminates all discriminants $|\Delta'|\geq 1024$.

\subsubsection{Eliminating small discriminants}
\label{sssec:small_disc}

It only remains to eliminate discriminants $\Delta'$ satisfying $|\Delta'|<1024$ and $h(\Delta')=h(4\Delta')\geq 3$. The list of such discriminants can be easily computed using \textsf{PARI}.

The previous estimations are not accurate enough to repeat the same calculations when $\Delta'$ is small. Nevertheless, for a fixed discriminant $\Delta'$, we can compute numerically all the conjugates of the CM-point $(j(\tau),j(\tau'))$ over $\mathbb{Q}$, and then procede to explicit estimations. Another difficulty arises from the number of conjugates of $(j(\tau),j(\tau'))$. Indeed, when $\Delta'$ is small, the existence of four conjugates $(x_i,y_i)$, $i\in\{1,2,3,4\}$, for which the absolute values of the $x_i$'s and $y_i$'s are respectively all distinct, is not guaranteed. The case corresponding to the existence of only three such conjugates requires a special treatment. In this case, we can bound both $m$ and $n$ thanks to known estimations of linear forms in the logarithms of algebraic numbers, and using some theory of continued fractions leads to a few values of $m$ and $n$ to examine.

In this subsection, we assume that $|\Delta'|<1024$. Denote $(x_1,y_1),\dots,(x_r,y_r)$ a maximal system of conjugates of the CM-point $(j(\tau),j(\tau'))$ over $\mathbb{Q}$, keeping only one couple for each pair of complex conjugates couples. It can be checked by a simple calculation in \textsf{PARI} that, for all discriminants $|\Delta'|<1024$, the absolute values of the $x_i$'s and $y_i$'s are respectively all distinct. We may still choose $(x_1,y_1)=(j(\tau),j(\tau'))$, the conjugate corresponding to the dominant $j$-values $j(\tau)$ and $j(\tau')$ of respective discriminants $\Delta$ and $\Delta'$. We may also index the $x_i$'s such that $|x_1|>|x_2|>\dots>|x_r|$. Moreover, we consider that $r\geq 3$. It turns out that, under the assumptions \eqref{eq:assum-hsup3} and \eqref{eq:assum-delta}, this condition is satisfied except for the discriminants $\Delta'\in\{-23,-31\}$.

Let $i,j\in\{2,\dots,r\}$, $i<j$. We can apply again the previous method to bound the quantity $m/n$, using the collinearity of the points $(x_k,y_k)$, $k\in\{1,i,j\}$. The only detail we have to pay attention to is the comparison between $|y_i|$ and $|y_j|$. From the identity
\[
\begin{vmatrix}
1 & x_1^m & y_1^n\\
1 & x_i^m & y_i^n\\
1 & x_j^m & y_j^n
\end{vmatrix}=0,
\]
we write the two following equations:
\begin{gather}
\label{eq:colingen}
\left(\frac{x_1}{x_i}\right)^m\left(\frac{y_1}{y_i}\right)^{-n}-1=\frac{\left(\frac{y_j}{y_i}\right)^n+\left(\frac{x_j}{x_1}\right)^m-\left(\frac{y_j}{y_1}\right)^n-\left(\frac{x_j}{x_i}\right)^m}{1-\left(\frac{y_j}{y_i}\right)^n-\left(\frac{x_j}{x_1}\right)^m},\\
\label{eq:colingen-rev}
\left(\frac{x_1}{x_i}\right)^m\left(\frac{y_1}{y_j}\right)^{-n}+1=\frac{\left(\frac{x_j}{x_i}\right)^m\left(-1+\left(\frac{y_i}{y_1}\right)^n\right)+\left(\frac{y_i}{y_j}\right)^n+\left(\frac{x_i}{x_1}\right)^m}{-1+\left(\frac{y_i}{y_j}\right)^n+\left(\frac{x_i}{x_1}\right)^m}.
\end{gather}
If follows that:

\begin{itemize}

\item if $|y_i|>|y_j|$, then by \eqref{eq:colingen}, $|(x_1/x_i)^m(y_1/y_i)^{-n}-1|$ is bounded by an effective constant $M<1$ given by
\[M=\frac{|y_j/y_i|+|x_j/x_1|+|y_j/y_1|+|x_j/x_i|}{1-|y_j/y_i|-|x_j/x_1|},\]
and we get
\begin{equation}
\label{eq:bornmn1-eff}
\frac{-M+(1-M)\log|y_1/y_i|}{(1-M)\log|x_1/x_i|}\leq\frac{m}{n}\leq\frac{M+(1-M)\log|y_1/y_i|}{(1-M)\log|x_1/x_i|};
\end{equation}

\item if $|y_i|<|y_j|$, then by \eqref{eq:colingen-rev}, $|(x_1/x_i)^m(y_1/y_j)^{-n}+1|$ is bounded by an effective constant $M<1$ given by
\[M=\frac{|x_j/x_i|(1+|y_i/y_1|)+|y_i/y_j|+|x_i/x_1|}{1-|y_i/y_j|-|x_i/x_1|},\]
and we get
\begin{equation}
\label{eq:bornmn2-eff}
\frac{-M+(1-M)\log|y_1/y_j|}{(1-M)\log|x_1/x_i|}\leq\frac{m}{n}\leq\frac{M+(1-M)\log|y_1/y_j|}{(1-M)\log|x_1/x_i|}.
\end{equation}

\end{itemize}

If $r\geq 4$, we can thus compute several bounds on the quantity $m/n$, one for each couple $(i,j)\in\{2,\dots,r\}^2$ with $i<j$, and we just have to find two bounds contradicting each other (that is two disjoint intervals for $m/n$) to eliminate this possibility. It is a simple verification in \textsf{PARI}.

From now on, we consider that $r=3$, and we have $h\leq 5$. The corresponding discriminants $\Delta'$ are $-39,-47,-55,-63,-79,-103,-127$.

Denote $\alpha=x_1/x_2$ and $\beta=y_1/y_2$ if $|y_2|>|y_3|$, else $\beta=y_1/y_3$. Recall that there exist effective constants $M<1$ and $c_1<c_2<1$ such that
\begin{equation}
\label{eq:estimab}
|\alpha^m\beta^{-n}+(-1)^{\varepsilon}|\leq M,
\end{equation}
with $\varepsilon=1$ if $|y_2|>|y_3|$, otherwise $\varepsilon=0$, and
\begin{equation}
\label{eq:bornmn}
c_1\leq m/n\leq c_2.
\end{equation}
In particular, $m\leq n$.

From \eqref{eq:colingen} and \eqref{eq:colingen-rev}, we can compute two constants $c_3>1$ and $c_4<1$ such that
\begin{equation}
\label{eq:estimab-prec}
|\alpha^m\beta^{-n}+(-1)^{\varepsilon}|\leq c_3.c_4^n,
\end{equation}
Therefore,
\begin{equation}
\label{eq:maxlambda}
|m\log\alpha-n\log\beta+(2k-1+\varepsilon)i\pi|\leq c_3'.c_4^n,
\end{equation}
with $c_3'=c_3/(1-M)>1$. Here, $\log$ denotes the principal complex logarithm (defined on $\mathbb{C}\setminus\mathbb{R}^-$), and the integer $k$ must satisfy $|2k|\leq m+n$.

Combining inequation \eqref{eq:maxlambda} with Theorem \ref{th:matveev} allows us to bound $n$ as follows. Denote $\Lambda=m\log\alpha-n\log\beta+(2k-1+\varepsilon)i\pi$, and notice that $i\pi=\log(-1)$. Before we procede, we have to verify that $\Lambda\neq 0$. If $\Lambda=0$, then either
\[\left(\frac{x_1}{x_2}\right)^m=\left(\frac{y_1}{y_2}\right)^n\]
and $x_1/x_2$ and $y_1/y_2$ are multiplicatively dependent if $|y_2|>|y_3|$, or
\[\left(\frac{x_1}{x_2}\right)^m=-\left(\frac{y_1}{y_3}\right)^n\]
and $x_1/x_2$ and $y_1/y_3$ are multiplicatively dependent if $|y_2|<|y_3|$. Both possibilites can be eliminated for all the discriminants mentioned above by using the algorithm described in Subsection \ref{ssec:mult_ind}.

All we need to do now is to estimate the different parameters occuring in \eqref{eq:matveev}.

\begin{itemize}

\item Remarking that $\mathbb{Q}(\alpha,\beta)\subset K(x_1,y_1)=K(x_1)$, where $K=\mathbb{Q}(\tau)=\mathbb{Q}(\tau')$, we have
\[d=[\mathbb{Q}(\alpha,\beta,-1):\mathbb{Q}]\leq[K(x_1):\mathbb{Q}]=2h\leq 10.\]

\item By Lemma \ref{lemme:height-j}, we have
\[\mathrm{h}(\alpha)\leq 2\mathrm{h}(x_1)\leq 9|\Delta|^{1/2}=18|\Delta'|^{1/2}.\]
Besides,
\[\lvert\log\alpha|/d\leq\lvert\log|\alpha||/d+\pi/d\leq\mathrm{h}(\alpha)+\pi/d\leq 19|\Delta'|^{1/2}.\]
Hence we can take $A_1=19|\Delta'|^{1/2}$. By the same way, we can take $A_2=10|\Delta'|^{1/2}$, and $A_3=1$.

\item At last, $H=\max\{m,n,|2k-1+\varepsilon|\}\leq m+n+1\leq n(1+c_2)+1\leq 3n$.

\end{itemize}

\noindent Therefore,
\begin{equation}
\label{eq:minlambda}
|\Lambda|>\mathrm{e}^{-1671257674219520h^5|\Delta'|\log(2\mathrm{e}h)\log(3\mathrm{e}n)}.
\end{equation}

Finally, by inequations \eqref{eq:maxlambda} and \eqref{eq:minlambda}, we get
\begin{equation}
\label{eq:maj-n/logn}
\frac{n}{\log(3\mathrm{e}n)}<\frac{-1671257674219520h^5|\Delta'|\log(2\mathrm{e}h)-\log c_3'}{\log c_4}=c_5,
\end{equation}
and then
\begin{equation}
\label{eq:maj-n}
n<\left(1+\frac{1}{\mathrm{e}}\right)c_5\log(3(1+\mathrm{e})c_5)=c_6.
\end{equation}

In practice, the bound we obtain on $n$ is too large to examine all possible $m$ and $n$. That is why we connect the fraction $m/n$ to the continued fraction expansion of some real number.

Considering again estimation \eqref{eq:estimab-prec}, we have
\begin{equation}
\label{eq:estimtheta}
\left|\theta-\frac{m}{n}\right|\leq\frac{c_3'.c_4^n}{n\log|\alpha|},
\end{equation}
with $\theta=\log|\beta|/\log|\alpha|$. Hence we obtain a sharp approximation of the real number $\theta$ by the rational number $m/n$. By \cite[Theorem 5C]{Sc80}, two situations can occur.

\begin{itemize}

\item If $|\theta-m/n|\leq 1/(2n^2)$, then writing $m/n$ into irreducible form $p/q$, the fraction $p/q$ is a convergent to $\theta$.

\item Otherwise, we have
\[n\leq\frac{\log(2c_3n/\log|\alpha|)}{\log(c_4^{-1})}\leq\frac{\log(2c_3c_6/\log|\alpha|)}{\log(c_4^{-1})}=c_6',\]
the last inequality coming from \eqref{eq:maj-n}. Notice that this process can be repeated until we find an upper bound for $n$ less than $1$: indeed, if $c_6'>1$, we have
\[n\leq\frac{\log(2c_3c_6')/\log|\alpha|}{\log(c_4^{-1})}=c_6'',\]
and so on, each new bound being smaller than the previous one. Hence, this situation does not occur.


\end{itemize}

Consequently, all possible couples $(m,n)$ are given by the multiples of the convergents $p/q$ of the continued fraction expansion of $\theta$ and the bound \eqref{eq:maj-n} on $n$. Finally, remarking that the right term in the inequality \eqref{eq:estimtheta} converges much faster to $0$ than its left term as $n$ grows, leads us to the following verification in \textsf{PARI}. For each convergent $p/q$, we check that
\begin{equation}
\label{eq:neg-estimtheta}
\log|\theta q-p|>q\log(c_4')+\log\left(\frac{c_3'}{\log|\alpha|}\right),
\end{equation}
which contradicts directly \eqref{eq:estimtheta} for the given couple $(p,q)$. Since the right term of \eqref{eq:neg-estimtheta} decreases as $q$ grows, then if \eqref{eq:neg-estimtheta} is satisfied for $(p,q)$, it is also satisfied for all multiples $(m,n)$ of $(p,q)$. Therefore, we only have to check that \eqref{eq:neg-estimtheta} is satisfied for all convergents $p/q$ up to $q<c_6$, the bound we obtained in \eqref{eq:maj-n}.

%
%

\subsection{The case $\mathbb{Q}(\tau)\neq\mathbb{Q}(\tau')$}
\label{subsec:Qtau_eq}

To conclude the proof, we have to show that the case $\mathbb{Q}(\tau)\neq\mathbb{Q}(\tau')$ cannot occur. Assume that $\mathbb{Q}(\tau)\neq\mathbb{Q}(\tau')$. Here we still work under assumptions \eqref{eq:assum-colin}, \eqref{eq:assum-hsup3} and \eqref{eq:assum-jdom}. The only difference is that all possible pairs $\{\Delta,\Delta'\}$ are given by Table 4.1 of \cite{Al15}, reproduced below.

\begin{table}[H]

\caption{Fields presented as $\mathbb{Q}(j(\tau))$ and $\mathbb{Q}(j(\tau'))$ with $\mathbb{Q}(\tau)\neq\mathbb{Q}(\tau')$}

\[
\begin{array}{l|l|l}
\text{Field $L$} & [L:\mathbb{Q}] & \Delta\\
\hline
\mathbb{Q} & 1 & -3, -4, -7, -8, -11, -12, -16, -19, -27, -28, -43,\\ 
& & -67, -163\\
\hline
\mathbb{Q}(\sqrt{2}) & 2 & -24, -32, -64, -88\\
\mathbb{Q}(\sqrt{3}) & 2 & -36, -48\\
\mathbb{Q}(\sqrt{5}) & 2 & -15, -20, -35, -40, -60, -75, -100, -115, -235\\
\mathbb{Q}(\sqrt{13}) & 2 & -52, -91, -403\\
\mathbb{Q}(\sqrt{17}) & 2 & -51, -187\\
\hline
\mathbb{Q}(\sqrt{2},\sqrt{3}) & 4 & -96, -192, -288\\
\mathbb{Q}(\sqrt{3},\sqrt{5}) & 4 & -180, -240\\
\mathbb{Q}(\sqrt{5},\sqrt{13}) & 4 & -195, -520, -715\\
\mathbb{Q}(\sqrt{2},\sqrt{5}) & 4 & -120, -160, -280, -760\\
\mathbb{Q}(\sqrt{5},\sqrt{17}) & 4 & -340, -595\\
\hline
\mathbb{Q}(\sqrt{2},\sqrt{3},\sqrt{5}) & 8 & -480, -960
\end{array}
\]

\label{table:double}

\end{table}

The idea we developed in \ref{sssec:small_disc} can be used again here to eliminate all possible pairs $\{\Delta,\Delta'\}$ with $\Delta\neq\Delta'$, $h(\Delta)=h(\Delta')\geq 3$ and $\mathbb{Q}(j(\tau))=\mathbb{Q}(j(\tau'))$: for each such pair, we determine three conjugates $(x_1,y_1),(x_2,y_2),(x_3,y_3)$ of the CM-point $(j(\tau),j(\tau'))$ by identifying first the dominant $j$-values of respective discriminants $\Delta$ and $\Delta'$; then, Theorem \ref{th:matveev} allows to bound $n$ as previously, except that \eqref{eq:minlambda} is replaced by
\[|\Lambda|>\mathrm{e}^{-2748779069440h^5|\Delta|^{1/2}|\Delta'|^{1/2}\log(\mathrm{e}h)\log(3\mathrm{e}n)},\]
and the constants $c_5$ and $c_6$ given by equations \eqref{eq:maj-n/logn} and \eqref{eq:maj-n} respectively change consequently; finally, all possible couples $(m,n)$ are given by the continued fraction expansion of some real $\theta$ by \eqref{eq:estimtheta}. A similar calculation in \textsf{PARI} leads to eliminating all remaining pairs $\{\Delta,\Delta'\}$, which finally demonstrates Theorem \ref{th:main}.

\section{Proof of Theorem \ref{th:main-2}}

\subsection{Reduction}
\label{ssec:red-2}

Let $j(\tau),j(\tau')$ be two singular moduli of respective discriminants $\Delta$ and $\Delta'$, and $m,n$ two non-zero integers. Assume that $j(\tau)^mj(\tau')^n=A\in\mathbb{Q}^{\times}$.

First of all, we clearly have $\mathbb{Q}(j(\tau)^m)=\mathbb{Q}(j(\tau')^n)$, hence $\mathbb{Q}(j(\tau))=\mathbb{Q}(j(\tau'))$ by Lemma \ref{lemme:power-j} again. We set $h=h(\Delta)=h(\Delta')$. When $h=1$, we are obviously in the “rational case” of Theorem \ref{th:main-2}. Moreover, if $j(\tau)=j(\tau')$, then $j(\tau)^{m+n}\in\mathbb{Q}^{\times}$, and either $m+n=0$, that is the “equality case”, or $m+n\neq 0$ and $j(\tau)\in\mathbb{Q}^{\times}$ since $\mathbb{Q}(j(\tau)^{m+n})=\mathbb{Q}(j(\tau))$.

From now on, we assume that $h\geq 2$.

The method we described in Subsection \ref{ssec:mult_ind} is useful to “eliminate” some particular pairs of discriminants $\{\Delta,\Delta'\}$; here, “eliminating” means showing that for any choice of singular moduli $j(\tau)$ and $j(\tau')$ of respective discriminants $\Delta$ and $\Delta'$, we have $j(\tau)^mj(\tau')^n\notin\mathbb{Q}^{\times}$. It is equivalent to fix two particular singular moduli $j(\tau)$ and $j(\tau')$ of respective discriminants $\Delta$ and $\Delta'$, and to show that for all automorphism $\sigma$ of the Galois group $\Gal(L/\mathbb{Q})$, we have $j(\tau)^m(j(\tau')^{\sigma})^n\notin\mathbb{Q}^{\times}$, where $L$ is a field containing all the conjugates of $j(\tau)$ and $j(\tau')$ over $\mathbb{Q}$; we can choose $L=\mathbb{Q}(\tau,j(\tau))$ if $\mathbb{Q}(\tau)=\mathbb{Q}(\tau')$, else $L=\mathbb{Q}(j(\tau))$ and $L$ is one of the field given by Table \ref{table:double}. We explained in Subsection \ref{ssec:mult_ind} how to treat this problem.

In particular, we were able to eliminate all pairs of discriminants $\{\Delta,\Delta'\}$ given by Table \ref{table:double} with $h\geq 3$ (corresponding to the case $\mathbb{Q}(\tau)\neq\mathbb{Q}(\tau')$) and all pairs with $\Delta\neq\Delta'$ and $h=2$. In the case $\Delta=\Delta'$ and $h=2$, $j(\tau)$ and $j(\tau')$ are conjugate over $\mathbb{Q}$, and as $j(\tau)\neq j(\tau')$, they must satisfy
\[\left(\frac{j(\tau)}{j(\tau')}\right)^{m-n}=1.\]
Since $|j(\tau)|\neq |j(\tau')|$ (one of these two singular moduli is the dominant $j$-value of discriminant $\Delta$), we deduce $m=n$. This corresponds to the “quadratic case” of Theorem \ref{th:main-2}.

Therefore, in the sequel, we assume that $h\geq 3$ and $\mathbb{Q}(\tau)=\mathbb{Q}(\tau')$. Recall that we have then $\Delta/\Delta'\in\{1,4,1/4\}$.

The proof splits into two different parts: we consider separately the cases $mn<0$ and $mn>0$. In both cases, our aim is to reduce the problem to finitely many pairs of discriminants $\{\Delta,\Delta'\}$ to examine, and apply the same method as before to eliminate the remaining cases.


\subsection{The case $mn<0$}

We consider the case $mn<0$. Noticing that the CM-point $(j(\tau),j(\tau'))$ belongs either to the curve of equation $x^m-Ay^{-n}=0$ if $m>0$ and $n<0$, or to the curve of equation $Ax^{-m}-y^n=0$ if $m<0$ and $n>0$, we are in the situation of Theorem \ref{th:main} with $C=0$. In particular, the reduction we did in section \ref{ssec:red} is still valid: $(j(\tau),j(\tau'))$ is conjugate over $\mathbb{Q}$ to a couple whose both coordinates are the dominant $j$-values of respective discriminants $\Delta$ and $\Delta'$. In particular, $\Delta\neq\Delta'$, since $j(\tau)\neq j(\tau')$. Thereby, we can assume without loss of generality that $\Delta=4\Delta'$, and we have $(j(\tau),j(\tau'))\in Y_0(2)$.

It follows that, when $|\Delta'|\geq 256$, Table \ref{table:conj} provides us three explicit conjugates $(x_1,y_1),(x_2,y_2),(x_3,y_3)$ of the CM-point $(j(\tau),j(\tau'))$ over $\mathbb{Q}$. These conjugates satisfy
\[\left(\frac{x_1}{x_i}\right)^m=\left(\frac{y_1}{y_i}\right)^{-n},\quad i\in\{2,3\}.\]
Hence, we have
\begin{equation}
\label{eq:m/n-mn<0}
-\frac{m}{n}=\frac{\log|y_1/y_i|}{\log|x_1/x_i|},\quad i\in\{2,3\}.
\end{equation}
Combining estimations \eqref{eq:estimxiyi} with relations \eqref{eq:m/n-mn<0} gives
\begin{equation}
\label{eq:egal-mn}
\frac{1/2+O(1)}{7/4+O(1)}=\frac{3/4+O(1)}{15/8+O(1)},
\end{equation}
with $|O(1)|\leq 0.001$ here. One can thus easily observe that the left term of \eqref{eq:egal-mn} is less than its right term in absolute values: this leads to a contradiction and eliminates all discriminants $|\Delta'|\geq 256$. It only remains finitely many cases to eliminate, that is pairs of discriminants $\{\Delta,\Delta'\}$ satisfying $\Delta=4\Delta'$, $h(\Delta)=h(\Delta')\geq 3$ and $|\Delta'|<256$. We eliminated them with the usual method described in \ref{ssec:mult_ind} and \ref{ssec:red-2}.

\subsection{The case $mn>0$}

Now we consider the case $mn>0$. Without loss of generality, since $j(\tau)^{-m}j(\tau')^{-n}\in\mathbb{Q}^{\times}$, we may assume that $m,n>0$ and additionnally $m\geq n$.

In this subsection, we assume that $h>6$, and additionnally $m\geq n$ without loss of generality. In particular,
\[|\Delta|,|\Delta'|\geq 71.\]

We mainly follow the proof presented in Section 3 of \cite{Yu15}. The idea is to bound $A$ from below and from above in terms of $\Delta$ and $\Delta'$, and see that the two bounds contradict each other in all but finitely many cases. We refer to this article for all the details.

We want to bound $A$ from below. On the one hand, by conjugating over $\mathbb{Q}$, we may assume that $j(\tau)$ is the dominant $j$-value of discriminant $\Delta$, and we deduce from Lemma \ref{lemme:estim-j} that
\[|j(\tau)|\geq\mathrm{e}^{\pi|\Delta|^{1/2}}-2079\geq 0.999\mathrm{e}^{\pi|\Delta|^{1/2}}.\]
On the other hand, \cite[Proposition 2.2]{Yu15} implies that
\[|j(\tau')|\geq\min\{4.4\cdot 10^{-5},\,3500|\Delta'|^{-3}\}.\]
Hence we obtain the following lower estimate of $A$:
\begin{equation}
\label{eq:A-min}
|A|\geq \left(0.999\mathrm{e}^{\pi|\Delta|^{1/2}}\right)^m\left(\min\{4.4\cdot 10^{-5},\,3500|\Delta'|^{-3}\}\right)^n\geq\left(3000\mathrm{e}^{\pi|\Delta|^{1/2}}\min\{10^{-8},|\Delta'|^{-3}\right)^m
\end{equation}
(recall that $m\geq n$ for the last inequality).

Now we want to bound $A$ from above. Every conjugate of the couple $(j(\tau),j(\tau'))$ is of the form
\[\left(j\left(\frac{-b+\sqrt{\Delta}}{2a}\right),j\left(\frac{-b'+\sqrt{\Delta'}}{2a'}\right)\right),\quad(a,b,c)\in T_{\Delta},\quad(a',b',c')\in T_{\Delta'}.\]
The larger enough $a$ and $a'$ are, the sharper the upper estimate we obtain on $A$ is. Since $h>6$ and there are at most two triples in $T_{\Delta}$ (resp. $T_{\Delta'}$) with $a=2$ (resp. $a'=2$), there is a conjugate of $(j(\tau),j(\tau'))$ of the previous form with $a,a'\geq 3$. Using again Lemma \ref{lemme:estim-j}, we deduce
\begin{equation}
\label{eq:A-max}
|A|\leq\left(\mathrm{e}^{\pi|\Delta|^{1/2}/3}+2079\right)^m\left(\mathrm{e}^{\pi|\Delta'|^{1/2}/3}+2079\right)^n\leq\left(1.71\mathrm{e}^{\pi(|\Delta|^{1/2}+|\Delta'|^{1/2})/3}\right)^m.
\end{equation}

Finally, combining \eqref{eq:A-min} and \eqref{eq:A-max} leads to
\begin{equation}
\label{eq:A-minmax}
3000\mathrm{e}^{\pi|\Delta|^{1/2}}\min\{10^{-8},|\Delta'|^{-3}\}\leq 1.71\mathrm{e}^{\pi(|\Delta|^{1/2}+|\Delta'|^{1/2})/3}.
\end{equation}

\begin{itemize}

\item If $\Delta=\Delta'$, inequation \eqref{eq:A-minmax} becomes
\[3000\mathrm{e}^{\pi|\Delta|^{1/2}/3}\min\{10^{-8},|\Delta|^{-3}\}\leq 1.71,\]
and yields $|\Delta|\leq 109$. The only discriminants $\Delta$ such that $h(\Delta)>6$ and $|\Delta|\leq 109$ are $-71$ and $-95$.

\item If $\Delta=4\Delta'$, inequation \eqref{eq:A-minmax} becomes this time
\[3000\mathrm{e}^{\pi|\Delta'|^{1/2}}\min\{10^{-8},|\Delta'|^{-3}\}\leq 1.71,\]
and yields $|\Delta'|\leq 12$, a contradiction.

\item If $\Delta'=4\Delta$, inequality \eqref{eq:A-minmax} does not allow us to bound $\Delta$ and $\Delta'$ as previously. However, the ``class number formula'' implies that $\Delta\equiv 1\bmod 8$, see \cite[section 3.2.2]{Yu15}. Then, according to Lemma \ref{lemme:T_Delta}, there are exactly two triples $(a,b,c)\in T_{\Delta}$ with $a=8$ provided that $|\Delta|\geq 239$, whereas there is no triple $(a',b',c')\in T_{\Delta'}$ with $a'=2$. Consequently, there is a conjugate of $(j(\tau),j(\tau'))$ with $a=8$ and $a'\geq 3$, and we obtain a new upper estimate for $A$:
\[|A|\leq \left(\left(\mathrm{e}^{\pi|\Delta|^{1/2}/8}+2079\right)\left(\mathrm{e}^{2\pi|\Delta|^{1/2}/3}+2079\right)\right)^m.\]
Combining with \eqref{eq:A-min} leads to
\[3000\mathrm{e}^{\pi|\Delta|^{1/2}}\min\{10^{-8},|\Delta|^{-3}/8\}\leq \left(\mathrm{e}^{\pi|\Delta|^{1/2}/8}+2079\right)\left(\mathrm{e}^{2\pi|\Delta|^{1/2}/3}+2079\right)\]
and yields $|\Delta|\leq 310$.

\end{itemize}

It only remains to study finitely many cases, that is pairs of discriminants $\{\Delta,\Delta'\}$ satisfying $h(\Delta)=h(\Delta')$ and one of the three following conditions:
\begin{itemize}

\item $3\leq h(\Delta)\leq 6$ and $\Delta/\Delta'\in\{1,4\}$;

\item $\Delta=\Delta'\in\{-71,-95\}$;

\item $h(\Delta)>6$, $\Delta'=4\Delta$ and $|\Delta|\leq 310$.

\end{itemize}
Again, we eliminated them with the usual method described in Subsections~\ref{ssec:mult_ind} and~\ref{ssec:red-2}.

{\footnotesize

}

\end{document}